\newcommand{\hgt}{\mathrm{ht}}
\newcommand{\br}[3]{{$#1$}$\lower4pt\hbox{$\tp\atop\raise4pt \hbox{$\scriptscriptstyle{#2}$}$} ${$#3$}}
\newcommand{\tw}[3]{{$#1$}${\,\scriptscriptstyle {#2}}\atop\raise9pt\hbox{$\scriptstyle\tp$} ${$#3$}}
\newcommand{\ttps}[2]{{#1}\raise5pt\hbox{$\lower12pt\hbox{$\scriptstyle\tp$}\atop \lower0pt\hbox{$\tilde\;$}$}\raise4.5pt\hbox{${\scriptstyle{#2}}$}}
\newcommand{\st}[1]{\mbox{${\,\scriptscriptstyle {#1}}\atop\raise5.5pt\hbox{$*$}$}}
\newcommand{\rd}[1]{\mbox{${\,\scriptscriptstyle {#1}}\atop\raise5.5pt\hbox{$\bullet$}$}}
\newcommand{\rt}[1]{\otimes_\chi}
\newcommand{\lt}[1]{\mbox{${\,\scriptscriptstyle {#1}}\atop\raise5.5pt\hbox{$\ltimes$}$}}
\newcommand{\btr}{\raise1.2pt\hbox{$\scriptstyle\blacktriangleright$}\hspace{2pt}}
\newcommand{\btl}{\raise1.2pt\hbox{$\scriptstyle\blacktriangleleft$}\hspace{2pt}}
\newcommand{\lcr}{\raise1.0pt \hbox{${\scriptstyle\rightharpoonup}$}}
\newcommand{\rcr}{\raise1.0pt \hbox{${\scriptstyle\leftharpoonup}$}}
\newcommand{\ttp}{{\lower12pt\hbox{$\tp$}\atop \hbox{$\tilde\;$}}}
\newcommand{\id}{\mathrm{id}}
\newcommand{\wt}{\mathrm{wt}}
\newcommand{\Ac}{\mathcal{A}}
\newcommand{\Jc}{\mathcal{J}}
\newcommand{\B}{{B}}
\newcommand{\Hg}{\mathfrak{H}}
\newcommand{\Ru}{\mathcal{R}}
\newcommand{\Cc}{\mathcal{C}}
\newcommand{\Vc}{\mathcal{V}}
\newcommand{\Xc}{\mathcal{X}}
\newcommand{\C}{\mathbb{C}}
\newcommand{\Z}{\mathbb{Z}}
\newcommand{\tp}{\otimes}
\newcommand{\U}{U}
\newcommand{\Fc}{\mathcal{F}}
\newcommand{\ve}{\varepsilon}
\newcommand{\gm}{\gamma}
\newcommand{\dt}{\delta}
\newcommand{\op}{\oplus}
\newcommand{\la}{\lambda}
\newcommand{\tr}{\triangleright}
\newcommand{\End}{\mathrm{End}}
\newcommand{\Span}{\mathrm{Span}}
\newcommand{\Hom}{\mathrm{Hom}}
\newcommand{\rk}{\mathrm{rk}}
\newcommand{\Rm}{\mathrm{R}}
\newcommand{\ad}{\mathrm{ad}}
\newcommand{\La}{\Lambda}
\newcommand{\Ga}{\Gamma}
\newcommand{\g}{\mathfrak{g}}
\renewcommand{\a}{\mathfrak{a}}
\renewcommand{\b}{\mathfrak{b}}
\renewcommand{\k}{\mathfrak{k}}
\newcommand{\h}{\mathfrak{h}}
\newcommand{\eps}{\epsilon}
\newcommand{\nn}{\nonumber}
\renewcommand{\l}{\mathfrak{l}}
\renewcommand{\c}{\mathfrak{c}}
\newcommand{\al}{\alpha}
\newcommand{\bt}{\beta}
\newcommand{\prt}{\partial}
\newcommand{\nbl}{\nabla}
\newcommand{\be}{\begin{eqnarray}}
\newcommand{\ee}{\end{eqnarray}}
\newtheorem{thm}{Theorem}[section]
\newtheorem{propn}[thm]{Proposition}
\newtheorem{lemma}[thm]{Lemma}
\newtheorem{remark}[thm]{Remark}
\newtheorem{definition}[thm]{Definition}
\newcommand{\parag}{\advance\prg by1 {\noindent\bf\thesection.\the\prg\hspace{6pt}}}
\begin{document}
\title{Mickelsson algebras via Hasse diagrams}

\author{
Andrey Mudrov${}^{\dag,\ddag}$ and Vladimir Stukopin${}^\ddag$
\vspace{10pt}\\
\small ${}^{\dag}$ University of Leicester, \\
\small University Road,
LE1 7RH Leicester, UK,
\vspace{10pt}\\
\small
${}^{\ddag}$ Moscow Institute of Physics and Technology,\\
\small
9 Institutskiy per., Dolgoprudny, Moscow Region,
141701, Russia,
\vspace{10pt}\\
\small
 e-mail:  am405@le.ac.uk, stukopin.va@mipt.ru
\\
Corresponding author: Andrey Mudrov}
\date{ }
\maketitle

\begin{abstract}
Let $\mathcal{A}$ be an associative algebra containing either classical or quantum universal enveloping algebra of
 a semi-simple complex Lie algebra $\mathfrak{g}$. We present a construction of the Mickelsson algebra $Z(\mathcal{A},\mathfrak{g})$
 relative to the left ideal in $\mathcal{A}$ generated by positive root vectors. Our method employs a calculus
 on Hasse diagrams associated with classical or quantum $\mathfrak{g}$-modules.
 We give an explicit expression for  a  PBW basis in $Z(\mathcal{A},\mathfrak{g})$ in the case when $\mathcal{A}=U(\mathfrak{a})$ of a finite-dimensional Lie algebra  $\mathfrak{a}\supset \mathfrak{g}$.
 For  $\mathcal{A}=U_q(\mathfrak{a})$ and $\mathfrak{g}$ the commutant of a Levi subalgebra in $\mathfrak{a}$, we construct a PBW basis  in terms of
 quantum Lax operators, upon extension of the ground ring of scalars to $\mathbb{C}[[\hbar]]$.
\end{abstract}
\begin{center}
%\select{Draft}
\end{center}
{\small \underline{Key words}: Mickelsson algebras, Hasse diagrams, quantum Lax operators}
\\
{\small \underline{AMS classification codes}: 17B10, 17B37.}

\newpage
%\tableofcontents

\section{Introduction}
\label{SecIntro}
Step algebras were invented by Mickelsson \cite{Mick} and intensively developed by Zhelobenko \cite{Zh2}.
They find interesting applications
in representation theory like Gelfand-Tsetlin basis \cite{Mol}, Harish-Chandra modules \cite{Mick,Zh1}, Yangians \cite{KN1,KN2,KN3} {\sl etc}.

Mickelsson algebras deliver an example  of reduction algebras $R(\Ac,\Jc)$ associated with a pair
of an associative algebra $\Ac$ and a left ideal $\Jc\subset \Ac$. The  algebra $R(\Ac,\Jc)$ is
a quotient $N(\Jc)/\Jc$ of the normalizer $N(\Jc)$  of $\Jc$, i.e. the maximal subalgebra in $\Ac$ where
$\Jc$ is a two-sided ideal. Multiplication  by $z\in N(\Jc)$ on the right preserves $\Jc$ and  induces an endomorphism of the $\Ac$-module $\Ac/\Jc$.
Furthermore, $R(\Ac,\Jc)$ is naturally isomorphic to the kernel of $\Jc$ in $\Ac/\Jc$.
In general,  the kernel of $\Jc$ in every $\Ac$-module $V$ parameterizes $\Hom_\Ac(\Ac/\Jc,V)$ and is preserved by $R(\Ac,\Jc)$.

When $\Ac$ contains a universal enveloping algebra  of a semi-simple Lie algebra $\g$ and $\Jc$ is generated by
its positive nilpotent subalgebra relative to a polarization $\g=\g_-\op\h\op \g_+$, $R(\Ac,\Ac\g_+)$ is called  Mickelsson algebra and is denoted by $Z(\Ac,\g)$.
It is responsible
for restriction of $\Ac$-modules to $\g$-modules and  can be viewed as the symmetry algebra of $\g$-singular
vectors, that parameterize $\g$-submodules in a wide class of  $\Ac$-modules.
One can similarly define a Mickelsson algebra $Z_q(\Ac,\g)$ upon replacement of $U(\g)$ with the quantum group $U_q(\g)$.

The main tool for construction  and study of Mickelsson algebras  is  a theory of extremal projector \cite{AST}, after the works of Zhelobenko, \cite{Zh2}.
That  is an element of a certain extension of the (quantized) universal enveloping algebra of $\g$ presentable as  a finite  product of infinite series in root vectors.
Its action on a general element of $\Ac$ is complicated and   explicitly computable in relatively simple cases.
 Our approach to Mickelsson algebras  employs transformation properties  of Lax operators and combinatorics of  Hasse diagrams \cite{Birk} associated with classical or quantum $\g$-modules.
This  can be viewed as an indirect although constructive  way of computing the action of the extremal projector.

The method of Hasse diagrams was initiated in \cite{M1} as a  development of a Nagel-Moshinsky construction \cite{NM} for lowering
and rasing operators for $\g\l(n)$, which technique had been  a precursor to step algebras.
The formalism of Hasse diagrams has lead to an explicit expression for the inverse Shapovalov form in terms of quantum Lax operators.
It turns out that  a modified algorithm yields an explicit formulation of Mickelsson algebras.
It was applied in \cite{AM} to a special case of isotypic, co-rank 1, reductive pairs of non-exceptional quantum groups.
Such pairs appear in the theory of Gelfand-Tsetlin basis, \cite{Mol}.
In this paper, we develop this formalism in a systematic way.

 \section{Quantum group basics}
\label{SecQuantGroups}

Let  $\g$ be a  semi-simple complex Lie algebra and  $\h\subset \g$ its Cartan subalgebra. Fix
a triangular decomposition  $\g=\g_-\op \h\op \g_+$  with  maximal nilpotent Lie subalgebras
$\g_\pm$.
Denote by  $\Rm \subset \h^*$ the root system of $\g$, and by $\Rm^+$ the subset of positive roots with basis $\Pi$
of simple roots. This basis  generates a root lattice $\Ga\subset \h^*$ with the positive semigroup $\Ga_+=\Z_+\Pi\subset \Ga$.
The height $\hgt(\mu)$ of $\mu\in \Ga_+$ is the number of simple roots entering $\mu$.
The Weyl vector $\frac{1}{2}\sum_{\al\in \Rm^+}\al$ is denoted by $\rho$.

Choose the  Killing form $(\>.\>,\>.\>)$ on $\g$, restrict it to $\h$, and transfer to $\h^*$ by duality.
For every $\la\in \h^*$ there is   a unique element $h_\la \in \h$ such that $\mu(h_\la)=(\mu,\la)$, for all $\mu\in \h^*$.

We assume that a deformation parameter  $q\in \C$ takes values away from  roots of unity.
The  quantum group $U_q(\g)$   is a complex Hopf algebra with the set of generators $e_\al$, $f_\al$, and $q^{\pm h_\al}$ labeled by $\al\in \Pi$ and satisfying relations \cite{D1,ChP}
$$
q^{h_\al}e_\bt=q^{ (\al,\bt)}e_\bt q^{ h_\al},
\quad
[e_\al,f_\bt]=\dt_{\al,\bt}[h_\al]_q,
\quad
q^{ h_\al}f_\bt=q^{-(\al,\bt)}f_\bt q^{ h_\al},\quad \forall \al, \bt \in \Pi.
$$
The symbol $[z]_q$, where  $z\in \h+\C$, stands for $\frac{q^{z}-q^{-z }}{q-q^{-1}}$ throughout the paper.
The elements $q^{\pm h_\al}$ are inverse to each other, while  $\{e_\al\}_{\al\in \Pi}$ and $\{f_\al\}_{\al\in \Pi}$ also
satisfy quantized Serre relations. Their exact form can be found in  \cite{ChP}; it is not important for this presentation.

A Hopf algebra structure on $U_q(\g)$ is fixed by the comultiplication
$$
\Delta(f_\al)= f_\al\tp 1+q^{-h_\al}\tp f_\al,\quad \Delta(q^{\pm h_\al})=q^{\pm h_\al}\tp q^{\pm h_\al},\quad\Delta(e_\al)= e_\al\tp q^{h_\al}+1\tp e_\al,
$$
set up on the generators and extended as an algebra homomorphism $U_q(\g)\to U_q(\g)\tp U_q(\g)$.
The antipode $\gm$ is an algebra and coalgebra anti-automorphism of $U_q(\g)$ that acts on the generators by the assignment
$$
\gm( f_\al)=- q^{h_\al}f_\al, \quad \gm( q^{\pm h_\al})=q^{\mp h_\al}, \quad \gm( e_\al)=- e_\al q^{-h_\al}.
$$
The counit homomorphism $\eps\colon U_q(\g)\to \C$ returns on the generators the values
$$
\eps(e_\al)=0, \quad \eps(f_\al)=0, \quad \eps(q^{h_\al})=1.
$$

There are subalgebras  $U_q(\h)$,  $U_q(\g_+)$, and $U_q(\g_-)$    in $U_q(\g)$
 generated by $\{q^{\pm h_\al}\}_{\al\in \Pi}$, $\{e_\al\}_{\al\in \Pi}$, and $\{f_\al\}_{\al\in \Pi}$, respectively.
The quantum Borel subgroups are defined as $U_q(\b_\pm)=U_q(\g_\pm)U_q(\h)$; they are Hopf subalgebras in $U_q(\g)$.
The algebra $U_q(\g)$ features a triangular factorization $U_q(\g_-)\tp U_q(\h)\tp U_q(\g_+)\stackrel{\simeq}{\longrightarrow} U_q(\g)$, similarly to $U(\g)$.
This is an isomorphism of vector spaces implemented by taking product of the tensor factors.

Given a $U_q(\h)$-module $V$, a non-zero vector $v$ is said to be of weight $\mu$ if $q^{h_\al}v=q^{(\mu,\al)} v$ for all $\al\in \Pi$.
The linear span of such vectors is denoted by $V[\mu]$. The set of weights of a module $V$ is denoted by $\La(V)$.
A $U_q(\g)$-module $V$ is said to be of highest weight $\la$ if it is generated by a weight vector $v\in V[\la]$ that
is killed by all $e_\al$. Such a vector $v$ is called highest; it is defined up to a non-zero scalar multiplier.

We choose a quasitriangular structure (universal R-matrix) $\Ru$ of $U_q(\g)$ so that every finite dimensional representation of its left (resp. right) tensor factor
produces a matrix with entries in $U_q(\b_-)$ (resp. $U_q(\b_+)$).

\section{Mickelsson algebras}
\label{SecMickAlg}
In this section we recall the definition and  basic facts about Mickelsson algebras.

Let $\g$ be a semi-simple Lie algebra  with a polarization $\g=\g_-\op \h \op \g_+$ and $U$ denote either its classical or quantized universal enveloping algebra
(quantum group).
Let $U^-U^0U^+$ be the triangular factorization relative to the polarization. The subalgebras $U^\pm$ are endowed with natural grading via setting
$\deg=1$ on the simple root generators. We denote by $B^\pm=U^\pm U^0= U^0 U^\pm$ the Borel subalgebras in $U$.

Suppose that $U$ is contained in an associative algebra $\Ac$. We assume that $\Ac$ is diagonalizable under the adjoint action of $U^0$.
Define a left ideal $\Jc=\Jc_+=\Ac\g_+\subset \Ac$ as the one generated by $e_\al$ with $\al \in \Pi$.
A right ideal $\Jc_-=\g_- \Ac$ is defined similarly.

Denote by $N(\Jc)$ the normalizer of $\Jc$ in $\Ac$. It is
the set of elements  $a\in \Ac$ such that $\Jc a \subset \Jc$. It is clear that $N(\Jc)$ is an algebra with unit and $\Jc$ is a two-sided ideal in it.

\begin{definition}
  The quotient $N(\Jc)/\Jc$ is called Mickelsson algebra of the pair $\Ac\supset U$ and denoted by $Z(\Ac,\g)$.
\end{definition}
In the special case when $\Ac$ is either classical or quantized universal enveloping algebra
of a simple Lie algebra $\a\supset\g$,  we use the notation $Z(\a,\g)$ or $Z_q(\a,\g)$ when we wish to emphasize the quantum group setting.

Consider an extension of the subalgebra $U^0$ to the ring $\hat U^0$ of fractions over $[h_\mu-c]_q$ (resp. $h_\mu-c$ in the classical case)
where $\mu$ is an integral weight  and the range of $c\in \C$  depends on set of weights of $\Ac$.
We assume that $\Ac$ is diagonalized  under the adjoint  $\hat U^0$-action and all weights are integral. Then we can
similarly  extend $\Ac $ to an algebra $\hat \Ac=\Ac\tp_{U^0}\hat U^0$, and  $\Jc_\pm$ to ideals $\hat\Jc_\pm\subset \hat \Ac$.

We suppose that there exist further extensions  $\hat {U}\subset \breve{\U}$, $\hat {\Ac}\subset \breve{\Ac}$,
$\hat \Jc_\pm\subset \breve{ \Jc}_\pm$
such that  $\Jc_\pm= \breve{ \Jc}_\pm\cap \Ac$, and there is an element (extremal projector) $\wp\in \breve  { \U}$ of zero weight satisfying
\be
\wp \Jc_-=0=\Jc_+\wp, \quad \wp^2=\wp, \quad \wp-1\in \breve { \Jc}_-\cap \breve  {\Jc}_+.
\label{extr_proj}
\ee
Such extensions do exist for $\Ac$  the (quantized) universal enveloping algebra of a Lie algebra containing $\g$.
The exact formulas for $\wp$ can be found in $\cite{K,KT}$.

The inclusion in the right formula of (\ref{extr_proj}) implies the following isomorphisms of  $\hat U^0$-bimodules:
$$
\hat \Ac/\hat \Jc_+ \simeq \hat \Ac\wp, \quad  \hat \Ac/\hat \Jc_-\simeq  \wp \hat \Ac, \quad
\hat \Ac/(\hat \Jc_-+\hat\Jc_+) \simeq  \wp \hat \Ac\wp.
$$
We denote by $\Vc$ the quotient $\hat \Ac/\hat \Jc_+$ and regard it as an $\Ac$-module. By $\Vc^+$ we denote the kernel of the ideal $\Jc$ in $ \Vc$, that is, the subspace of $U^+$-invariants.
It is a  natural $\hat U^0$-bimodule.

Define a map $\varphi\colon  \hat \Ac\to \breve{\Ac}$ by the assignment
$$
x\mapsto \wp x\wp.
$$
\begin{thm}[\cite{K}]
\label{thmMick}
The map $\varphi$ induces  an isomorphism of algebras
$$
\hat Z(\Ac,\g)\to \wp \hat \Ac \wp.
$$
\end{thm}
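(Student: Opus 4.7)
The plan is to exploit the factorization $\varphi = (\wp\cdot)\circ\pi_+$, where $\pi_+\colon \hat\Ac\to \hat\Ac\wp$ is the surjection $a\mapsto a\wp$ realizing the stated isomorphism $\hat\Ac/\hat\Jc_+\simeq \hat\Ac\wp$, and to identify left multiplication by $\wp$ on the module $\Vc=\hat\Ac/\hat\Jc_+$ with the projector onto the subspace $\Vc^+$ of $U^+$-invariants. Having that in hand, one chains the isomorphisms $\hat Z(\Ac,\g)\simeq \Vc^+\simeq \wp\hat\Ac\wp$ and reads off all the required properties of $\varphi$.

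I would first record that $\varphi$ annihilates $\hat\Jc_+$: if $x\in \hat\Jc_+$ then $x\wp\in \hat\Jc_+\wp=0$ by (\ref{extr_proj}), so $\wp x\wp=0$; hence $\varphi$ descends to $\hat\Ac/\hat\Jc_+$. Next I would establish the key geometric fact: $\wp$ is a projector of $\Vc$ onto $\Vc^+$. For $v\in\Vc$, $e_\al(\wp v)=(e_\al\wp)v=0$ forces $\wp\Vc\subset\Vc^+$; conversely for $v\in\Vc^+$ one writes $\wp-1=j_+\in\breve{\Jc}_+$ and expands $j_+$ as a (topologically convergent) sum of left multiples of the $e_\al$'s, every term of which kills $v$, giving $\wp v=v$. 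Combined with $\wp^2=\wp$, this says $\wp$ projects $\Vc$ onto $\Vc^+$. A parallel argument shows $N(\hat\Jc_+)=\{a\in\hat\Ac:e_\al a\in \hat\Jc_+\ \forall\al\in\Pi\}$, so
\[
\hat Z(\Ac,\g)=N(\hat\Jc_+)/\hat\Jc_+\ \simeq\ \Vc^+
\]
as $\hat U^0$-bimodules, and the iso $\pi_+\colon\Vc\xrightarrow{\sim}\hat\Ac\wp$ carries $\Vc^+$ onto $\wp\hat\Ac\wp$ (since the image of the projector $v\mapsto\wp v$ corresponds on the $\hat\Ac\wp$ side to $a\wp\mapsto \wp a\wp$).

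From here the theorem is essentially bookkeeping. Surjectivity of $\bar\varphi\colon \hat Z(\Ac,\g)\to\wp\hat\Ac\wp$: every $\wp x\wp$ is $\pi_+(\wp[x])$ and $\wp[x]\in\Vc^+$, so lifts to some $[a]\in\hat Z(\Ac,\g)$ with $\varphi(a)=\wp x\wp$. Injectivity: if $a\in N(\hat\Jc_+)$ and $\wp a\wp=0$, then since $[a]\in\Vc^+$ we have $\wp a\wp=a\wp$, hence $a\wp=0$, and the iso $\hat\Ac/\hat\Jc_+\simeq\hat\Ac\wp$ forces $a\in\hat\Jc_+$. Multiplicativity: for $a,b\in N(\hat\Jc_+)$ the identity $\wp b\wp=b\wp$ (which is just the statement $[b]\in\Vc^+$) gives
\[
\varphi(a)\varphi(b)=\wp a\wp\cdot\wp b\wp=\wp a\cdot \wp b\wp=\wp a\cdot b\wp=\wp ab\wp=\varphi(ab),
\]
using $\wp^2=\wp$ in the second equality.

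The main obstacle is the verification that $\wp$ acts as the identity on $\Vc^+$: this is the only place where one must go beyond purely algebraic manipulations in $\hat\Ac$ and use the topological/formal structure of the larger algebra $\breve\Ac$ in which the infinite product defining $\wp$ converges and in which the decomposition $\wp-1\in\breve{\Jc}_-\cap\breve{\Jc}_+$ is realized term by term. Once the convergence of the formal expansions $\wp-1=\sum_i b_i e_{\al_i}=\sum_j f_{\bt_j}c_j$ is justified in the setting of the extensions $\hat U\subset\breve\U$ from \cite{K,KT}, every vanishing statement $e_\al a\wp=0$ and $(\wp-1)v=0$ follows from the normalizer/invariance hypothesis applied termwise; all remaining identities in the proof are formal consequences of $\wp^2=\wp$ and $\hat\Jc_+\wp=0=\wp\hat\Jc_-$.
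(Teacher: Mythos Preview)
The paper does not prove this theorem at all: it is stated with attribution to \cite{K} and no proof is supplied in the text. There is therefore no ``paper's own proof'' to compare your argument against.

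Your argument is the standard one and is essentially correct. The chain $\hat Z(\Ac,\g)=N(\hat\Jc_+)/\hat\Jc_+\simeq \Vc^+\simeq \wp\hat\Ac\wp$ is exactly how the result is usually obtained; the paper in fact alludes to the first of these isomorphisms in the Introduction (``$R(\Ac,\Jc)$ is naturally isomorphic to the kernel of $\Jc$ in $\Ac/\Jc$''). Your verification of multiplicativity via $\wp b\wp=b\wp$ for $b\in N(\hat\Jc_+)$ is the right mechanism, and you correctly flag that the one nontrivial point---that $\wp-1\in\breve\Jc_+$ can be expanded termwise so that it annihilates $U^+$-invariants---is where the passage to the completion $\breve\Ac$ of \cite{K,KT} is genuinely used. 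Nothing is missing.
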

From now on we suppose that $\Ac$ is a free $U^- -  B^+$-bimodule relative to the regular left-right actions.
Suppose that  $\Xc\subset \Ac$ is a vector space   such that $\hat \Ac=U^-\Xc \hat B^+$.
Then $\Xc \hat U^0$ is isomorphically mapped onto the double coset algebra $\hat \Ac/(\hat J_-+ \hat J_+)$, \cite{K}.

We say that $\Xc\subset \Ac$ delivers a Poincare-Birkhoff-Witt (PBW) basis over $U$ if
there is a set of weight elements $\{a_i\}_{i=1}^k\subset \Xc$ such that ordered
monomials $a_1^{m_1}\ldots a_k^{m_k}\subset \Xc$ freely generate   $\Ac$ as a regular $U^- - \hat \B^+$-bimodule.
The set $\{a_i\}_{i=1}^k$ is called a PBW system and
its elements are called PBW generators. The images of PBW monomials in the double coset algebra $\hat \Ac/(\hat \Jc_-+\hat\Jc_+) $
deliver a basis over $\hat U^0$. It is known that the $\varphi$-image of a PBW-system
in $\wp \hat \Ac\wp\simeq $ generates a PBW-basis in $\hat Z(\Ac,\Jc)$ over $\hat U^0$, cf. \cite{K}.

The extremal projector is an ordered  product of factors, each of the form
$$
\wp_\al=\sum_{k=0}^{\infty }f_\al^k e_\al^k \wp_{\al,k}, \quad \wp_{\al,k}\in \hat U^0, \quad \al \in \Rm^+.
$$
Here $f_\al$ and $e_\al$ are root vectors associated with a normal ordering on $\Rm^+$, \cite{KT}.
For $a\in \Ac/\Jc$, pushing positive root vectors in $\wp a$  to the right  until they  are annihilated by $\Jc$
produces a complicated expression which  in general is hard to evaluate directly.
In this  paper we compute this action using a bypassing technique, under an  assumption that the PBW generators span
a finite dimensional $\B^+$-module.

 \section{Quantum Lax operators}
\label{SecQuantLax}
It is possible \cite{ChP} to choose a quasitriangular structure  for the quantum group $U$ in the form
$$
\Ru=q^{\sum_{i}h_i\tp h_i}\check{\Ru}, \quad \mbox{with}\quad \check{\Ru}\in U^+\tp \U^-,
$$
where $\{h_i\}_{i=1}^{\rk \g}$ is an orthonormal basis in $\h$  (here $\rk\> \g=\dim \h$ is the rank of $\g$).
The matrix $\check{\Ru}$  intertwines two comultiplications on $U$:
$$
\check{\Ru}\Delta(x)=\tilde \Delta(x)\check \Ru, \quad \forall x\in U,\quad \mbox{where}
$$
$$
\tilde \Delta(f_\al)=f_\al\tp 1 +q^{h_\al}\tp f_\al,\quad \tilde \Delta (q^{\pm h_\al})=q^{\pm h_\al}\tp q^{\pm h_\al}, \quad \tilde \Delta(e_\al)=e_\al\tp q^{-h_\al}+1\tp e_\al.
$$
Define
$$
\Fc=\frac{1}{q-q^{-1}}(\check \Ru -1\tp 1)\in U^+\tp \U^-.$$
 The intertwining relations for positive generators
translate to
\be
\label{int-rel}
(e_{\al}\tp q^{-h_{\al}} + 1\tp e_{\al}) \Fc-\Fc (e_{\al}\tp q^{h_{\al}} + 1\tp e_{\al})=e_{\al}\tp [h_{\al}]_q  , \quad \forall\al\in \Pi.
\ee
This is a key relation that underlies a calculus on Hasse diagrams. In the classical limit it becomes
$$
 [e_\al \tp 1+1\tp e_\al,\Cc] = e_\al\tp h_\al, \quad \al\in \Pi,
$$
where $\Fc$ degenerates to $\Cc=\sum_{\al\in \Rm^+}e_\al\tp f_\al$.

\begin{definition}
  We call a $U^+$-module graded if the action extends to an $B^+$-action diagonalizable over $U^0$ with finite dimensional weight spaces.
\end{definition}
\noindent
Note that such an extension is not unique because all weights can be shifted by some $\la\in \h^*$
(cf. e.g. Verma modules of different highest weights).

We assume the standard partial order on weights  of a graded $U^+$-module: $\nu_i>\nu_j$ if and only if  $\nu_i-\nu_j\in \Z_+\Pi$.

Let $X$ be a graded  $U^+$-module with representation homomorphism $\pi\colon U^+\to \End(X)$ and denote by $\pi^\al_{ij}$ the matrix entries $\pi(e_\al)_{ij}$ in a fixed weight basis $\{x_i\}_{i\in I_X}\subset X$.
We assume  that weights of $X$ are bounded from below.
Introduce a matrix
$$
F=\sum_{i,j}e_{ij}\tp \phi_{ij}=(\pi\tp \id)(\Fc)\in \End(X)\tp U^-,
$$
where $e_{ij}$ are standard matrix units, $e_{ij}x_k=\dt_{jk}x_i$.
The intertwining relations (\ref{int-rel}) reduce to equalities on the matrix entries:
\be
e_{\al}\phi_{ij}- \phi_{ij} e_{\al}=  \sum_{k\in I_X}\phi_{ik}q^{h_{\al}} \pi^\al_{kj}  -\sum_{k\in I_X} \pi^\al_{ik} q^{-h_{\al}}\phi_{kj}+\pi^\al_{ij}[h_\al]_q
\quad
\al\in \Pi.
\label{intertwiner_F}
\ee
 Note that the $F$ is strictly  lower triangular: $\phi_{ij}\not =0$ only if $\nu_i>\nu_j$; the weight
 $\wt(\phi_{ij})$  of the element $\phi_{ij}$ is $\nu_j- \nu_i<0$.

Suppose that $X$ is such that $X'\subset \Vc$ is a $B^+$-module isomorphic to the left dual to $X$ with respect to the comultiplication
$\tilde \Delta$. If  $\{\psi_i\}_{i\in I_X}\subset X'$ is the dual basis to a weight basis $\{x_i\}_{i\in I_X}\subset X$,
then the tensor $\psi_X=\sum_{i\in I_X}x_i\tp \psi_i$ is $B^+$-invariant. We call it canonical invariant relative to $X$.

We consider $\psi_X$ as a vector with components $\psi_i\in \Vc$. By definition, they transform under the action of $U^+$ as
\be
  e_\al \psi_i   =  -\sum_{k\in I_{X}} q^{-h_\al}\pi^\al_{ik}\psi_{k}, \quad \forall i\in I_{X}.
\label{can_invariant}
\ee
Note that  the weight of $\psi_i$ is $-\nu_i=-\wt(x_i)$.

\section{Hasse diagrams associated with representations}
\label{SecHasseDiag}

Recall that Hasse diagrams are oriented graphs associated with partially ordered sets.
Nodes are elements of the set, and arrows $i\leftarrow j$ are ordered pairs of nodes $(i,j)$, $i\succ j$ such that there is no other node between them.
We will associate a  Hasse diagram   with any graded $U^+$-module.
%For a direct sum of $U^+$-modules, its Hasse diagram will be  a disconnected union of diagrams of the summands.

Suppose that  $X$ is such a module that $X'\subset \Vc$, as discussed in the previous section. Let $\pi\colon U^+\to \End(X)$ denote the representation homomorphism. Fix a weight
 basis $\{x_i\}_{i\in I_X}\subset X$ and let $\nu_i$ denote the weights of $x_i$. The matrix entry  $\pi^\al_{ij}$ is  distinct from zero  only if $\nu_i-\nu_j=\al$.

The  nodes  of Hasse diagram $\Hg(X)$ are defined to be basis elements $\{x_i\}_{i\in I_X}$. We will also identify them with elements of the index set $I_X$.

 Arrows  are labeled by simple roots $\al\in \Pi$:
we write $i\stackrel{\al}{\longleftarrow} j$ if $\pi^\al_{ij}\not =0$.
A sequence of adjacent arrows
$$
m_1
\stackrel{\al_1}{\longleftarrow} m_2\stackrel{\al_2}{\longleftarrow} \ldots \stackrel{\al_k}{\longleftarrow} m_{k+1}
$$
is called a path (of length $k$) from $m_{k+1}$ to $m_1$.
%We will also consider paths of zero length, from a node $i$ to itself.
The partial order on $I_X$ is defined as follows: a node $i$ is superior to a  node $j$, $i\succ j$, if there is a path from $j$ to $i$.

There is a weaker partial order on $I_X$ induced by the standard partial order $>$ on their weights.
We have earlier noted that the matrix $F$ is lower triangular with respect to $>$. We will need a refinement of this property
with respect to   $\succ$.
\begin{propn}
\label{strong-triangular}
  For all $i,j\in I_X$, $\phi_{ij}\not=0\Rightarrow i\succ j$.
\end{propn}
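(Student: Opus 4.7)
The plan is to induct on the height $h = \hgt(\nu_i - \nu_j)$ and to use the intertwining relation (\ref{intertwiner_F}) as the inductive engine. I prove the contrapositive: if there is no directed path $j \to i$ in $\Hg(X)$, then $\phi_{ij} = 0$. Strict lower triangularity handles $h = 0$ vacuously. For the base case $h = 1$, the weight of $\phi_{ij}$ equals $-\al$ for a unique simple root $\al$, so $\phi_{ij} = c_{ij}\, f_\al$ for some $c_{ij} \in \C$. Feeding this into (\ref{intertwiner_F}) with that same $\al$, the two cross sums vanish by weight bookkeeping (they would demand $\phi_{ii}$ or $\phi_{jj}$ nonzero), leaving $c_{ij}[h_\al]_q = \pi^\al_{ij}[h_\al]_q$ inside $U^0$. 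Since $[h_\al]_q$ is not a zero divisor in $U^0$ (or its extension $\hat U^0$), $c_{ij} = \pi^\al_{ij}$, so $\phi_{ij} \neq 0$ forces $\pi^\al_{ij} \neq 0$, i.e. the arrow $i \stackrel{\al}{\longleftarrow} j$ exists.

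For the inductive step, assume the claim at all heights below $h$ and take $(i,j)$ with $i \not\succ j$ and $\hgt(\nu_i - \nu_j) = h \geq 2$. Fix $\al \in \Pi$ and read off (\ref{intertwiner_F}). The scalar term $\pi^\al_{ij}[h_\al]_q$ vanishes since $\nu_i - \nu_j$ is not simple. In the first sum, a nonzero summand $\phi_{ik}\, q^{h_\al}\, \pi^\al_{kj}$ would, by the induction hypothesis applied to $\phi_{ik}$ (whose weight has height $h - 1$), give $i \succ k$, while $\pi^\al_{kj} \neq 0$ supplies the arrow $k \stackrel{\al}{\longleftarrow} j$, hence a path $j \to k \to i$ contradicting $i \not\succ j$. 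The symmetric argument rules out any nonzero summand in the second sum. Thus the intertwining relation forces $[e_\al, \phi_{ij}] = 0$ for every simple $\al$.

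It remains to deduce $\phi_{ij} = 0$ from the fact that $\phi_{ij} \in U^-$ has nonzero (strictly negative) weight yet commutes with every simple $e_\al$; this is the step I expect to require the most care. I would invoke the standard fact that the simultaneous centraliser of $\{e_\al\}_{\al \in \Pi}$ inside $U^-$ consists only of scalars, which follows from the Drinfeld/Kashiwara skew-derivation description: the projection of $[e_\al, \,\cdot\,]$ onto $U^-$ along the triangular decomposition coincides (up to a nonzero factor and a $q^{h_\al}$-twist) with the left skew-derivation $\partial_\al$ of $U^-$, and simultaneous vanishing of all $\partial_\al$ on a weight element forces its weight to be zero, hence the element to be a constant, hence zero in our setting. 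This contradicts $\phi_{ij} \neq 0$ and closes the induction. The whole argument uses only (\ref{intertwiner_F}), the weight grading, and this nondegeneracy fact, consistent with the paper's emphasis that (\ref{intertwiner_F}) is the foundational relation of the Hasse-diagram calculus.
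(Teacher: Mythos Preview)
Your argument is correct but takes a genuinely different route from the paper's. The paper's proof is direct and short: since $\check{\Ru}\in U^+\tp U^-$, one has $\phi_{ij}=\bigl(y_i,\pi(\check{\Ru}_1)x_j\bigr)\check{\Ru}_2$, a $U^-$-valued linear combination of matrix coefficients $\pi(u)_{ij}$ with $u\in U^+$. Expanding $u$ in monomials of simple positive root vectors shows that any nonzero $\pi(u)_{ij}$ exhibits a path from $j$ to $i$ in $\Hg(X)$; hence $i\not\succ j$ forces every such coefficient, and therefore $\phi_{ij}$, to vanish. Your approach instead extracts the conclusion purely from the intertwining relation (\ref{intertwiner_F}) by height induction, at the cost of invoking the nondegeneracy of the Kashiwara--Lusztig skew-derivations on $U^-$ (equivalently, the standard fact that a nonconstant weight element of $U^-$ cannot commute with every $e_\al$). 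The trade-off: the paper's proof is shorter and uses nothing beyond the definition of $F$ via $\check{\Ru}$, while yours depends only on (\ref{intertwiner_F}) together with strict lower triangularity, so it would apply verbatim to any strictly lower-triangular $F\in\End(X)\tp U^-$ satisfying (\ref{intertwiner_F}), not just the one arising from the universal R-matrix.
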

\begin{proof}
Monomials in simple positive root vectors span $U^+$. The matrix entry
$\pi(e_{\al_1}\ldots e_{\al_k})_{ij}$ of such a monomial of weight $\nu_i-\nu_j=\sum_{l=1}^{k}\al_l$
equals $\sum_{i_0,\ldots, i_{k}} \pi^{\al_1}_{i_0i_1}\pi^{\al_2}_{i_1i_2}\ldots \pi^{\al_k}_{i_{k-1}i_k}$
with $i=i_0$ and $i_k=j$.
If  there is a non-zero summand in this sum, then all its factors are non-zero, and
the sequence $i_0\succ \ldots \succ i_k$ is a path to $i$ from $j$.

We conclude that $i\not \succ j$ forces   $\pi(u)_{ij}=0$ for  any  $u\in U^+$.
Let $\{y_l\}_{l\in I_X}$ be the dual basis  in the vector space of linear functions on  $X$.
We can write $\bigl(y_i,\pi(u) x_j\bigr)=\pi(u)_{ij}=0$
for all  $u\in U^+$.
Then $\phi_{ij}=\bigl(y_i,\pi(\check{\Ru}_1)x_j\bigr)\tp \check{\Ru}_2=0$ as required.
\end{proof}

A pair of nodes $(l,r)$ is called simple of there is an arrow from $r$ to $l$; then $\nu_l-\nu_r=\al \in \Pi$.
We denote the set of such pairs by $P(\al)$, it is in bijection with the subset of arrows in $\Hg(X)$ labelled by $\al$.

A route $\vec m =(m_1,\ldots m_k)$ from $j$ to $i$ is an arbitrary ordered sequence
$$
i=m_1\succ \ldots \succ m_k=j.
$$
We denote $\max(\vec m)=i $ and $\min(\vec m)=j$ and say
that $\vec m$ is a route $i\dashleftarrow j$ and  present it as $i\stackrel{\>\>\vec m}{\dashleftarrow} j$.
We will also suppress one of the nodes when we wish to emphasize the start or end node: for instance, $i\stackrel{\>\>\vec m}{\dashleftarrow} $ means that $\vec m$ is route $i\dashleftarrow$
(terminating at $i$).

 The integer $|\vec m|=k-1$ is called length of $\vec m$.
Thus a path to $i$ from  $j$ is a route $i\dashleftarrow j$ of maximal length equal to $\hgt(\nu_i-\nu_j)$.

We  orient  routes so  that nodes ascend from right to left. The reason will be clear in the next section where we
 assign a product of non-commutative matrix entries of quantum Lax operators to every route.

We will introduce  two partial operations on routes.
If  $\min(\vec m)\succ \max(\vec n)$, we will write $\vec m\succ \vec n$. There is a route that includes all nodes from $\vec m$ and $\vec n$.
We denote the resulting route  $(\vec m,\vec n)$ and call it union of $\vec m$ and $\vec n$. We will drop the vector superscript for routes consisting of one node.
For instance, $(\vec m, \vec n)=(i,j,k)$ if $\vec m=(i)\succ (j,k)=\vec n$.

The second operation is also the "union" of routes but it is defined if the start node of the left one coincides with the end
node of the right one. That is, given two routes $i\stackrel{\>\>\vec m}{\dashleftarrow} k\stackrel{\>\>\vec n}{\dashleftarrow} j$ we get
a route $i\stackrel{\>\>\vec m\cdot \vec n}{\dashleftarrow} j$ which is the concatenation of   $\vec m$ and $\vec n$.

It is clear that removing an arbitrary subset of nodes from a route is a route again (possibly empty).

\section{Localization of $U^+$-action}
In order to study the  $U^+$-action on $\Vc$, we introduce a pair of  auxiliary right $\hat U^0$-modules $\Phi_X$, $\Psi_X$, and a system of local operators
$\nbl_{l,r}\in \Hom_{\hat U^0}(\Psi_X, \Phi_X\tp_{\hat U^0} \Psi_X)$ for
all simple pairs $(l,r)$  of nodes (equivalently, all arrows) in the Hasse diagram $\Hg(X)$.
They will play a role of matrix units, and the operator $e_\al$ will be lifted to $\Hom_{\hat U^0}( \Psi_X,\Phi_X\tp_{\hat U^0} \Psi_X)$ as an expansion over $\nbl_{l,r}$ with ${(l,r)\in P(\al)}$.

For each weight $\mu\in \h^*$ put
\be
\eta_{\mu}=h_\mu+(\mu, \rho)-\frac{1}{2}(\mu,\mu) \in \h\op \C.
\ee
We will also use shortcuts $\eta_{ij}=\eta_{\nu_i-\nu_j}$ and $\eta_i=\eta_{\nu_i}$, for $i,j\in I_X$.

 \begin{lemma}
\label{thetas}
Let  $(l,r)\in P(\al)$ for some $\al\in \Pi$. Then, for all $j\in I_X$,
\be
 \eta_{lj}-\eta_{rj}&=&h_\al+(\al,\nu_j-\nu_r),
\label{thetas1}
\\
\eta_{l}-\eta_{r}&=&\eta_{lj}-\eta_{rj}-(\al,\nu_j)=h_\al-(\al,\nu_r).
\label{thetas2}
\ee
\end{lemma}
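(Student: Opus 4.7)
The plan is to reduce both formulas to a single algebraic identity for $\eta_{\mu_1}-\eta_{\mu_2}$ when $\mu_1-\mu_2=\al\in\Pi$, and then extract (\ref{thetas1}) and (\ref{thetas2}) as specializations.

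First I would unpack the definition: for arbitrary weights $\mu_1,\mu_2$ we have
$$
\eta_{\mu_1}-\eta_{\mu_2}=h_{\mu_1-\mu_2}+(\mu_1-\mu_2,\rho)-\tfrac{1}{2}(\mu_1-\mu_2,\mu_1+\mu_2),
$$
since the Killing form is bilinear and $h$ is linear in its subscript. Setting $\mu_1=\nu_l-\nu_j$, $\mu_2=\nu_r-\nu_j$, I use that the arrow $l\stackrel{\al}{\longleftarrow} r$ forces $\nu_l-\nu_r=\al$, hence $\mu_1-\mu_2=\al$ and $\mu_1+\mu_2=2\nu_r+\al-2\nu_j$. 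Substituting gives
$$
\eta_{lj}-\eta_{rj}=h_\al+(\al,\rho)-\tfrac{1}{2}(\al,\al)-(\al,\nu_r)+(\al,\nu_j).
$$

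The one nontrivial input is the classical identity $(\al,\rho)=\tfrac{1}{2}(\al,\al)$ valid for every simple root $\al\in\Pi$, which follows from $\rho(h_\al^\vee)=1$. Once this is invoked, the first two terms cancel and the right-hand side collapses to $h_\al+(\al,\nu_j-\nu_r)$, which is precisely (\ref{thetas1}).

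For (\ref{thetas2}) I would apply the same master identity with $\mu_1=\nu_l$, $\mu_2=\nu_r$, obtaining
$$
\eta_l-\eta_r=h_\al+(\al,\rho)-\tfrac{1}{2}(\al,\al)-(\al,\nu_r)=h_\al-(\al,\nu_r),
$$
again by $(\al,\rho)=\tfrac12(\al,\al)$. Comparing this with (\ref{thetas1}) immediately yields $\eta_l-\eta_r=\eta_{lj}-\eta_{rj}-(\al,\nu_j)$, closing the chain in (\ref{thetas2}).

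There is no real obstacle here: the result is a direct bookkeeping computation. The only point one must not overlook is the simple-root identity $(\al,\rho)=\tfrac12(\al,\al)$, which is what makes the $\tfrac12(\mu,\mu)$ correction in the definition of $\eta_\mu$ designed to cancel the quadratic terms along arrows of the Hasse diagram.
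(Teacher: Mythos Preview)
Your proof is correct and follows essentially the same approach as the paper: both expand $\eta_{lj}-\eta_{rj}$ directly from the definition of $\eta_\mu$ and invoke the simple-root identity $(\al,\rho)=\tfrac12(\al,\al)$ to cancel the quadratic terms, then derive (\ref{thetas2}) by comparison with (\ref{thetas1}). Your intermediate ``master identity'' for $\eta_{\mu_1}-\eta_{\mu_2}$ is a minor repackaging of the same computation the paper carries out in one line.
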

\begin{proof}
We have $(\al, \rho)=\frac{1}{2}|\!|\al|\!|^2$ for all $\al \in \Pi$ and $\nu_l-\nu_r=\al$.  Then
\be
\eta_{lj}-\eta_{rj}&=&h_\al+\frac{1}{2}|\!|\al|\!|^2-\frac{1}{2}|\!|\nu_j-\nu_r-\al|\!|^2+\frac{1}{2}|\!|\nu_j-\nu_r|\!|^2=h_\al+(\al,\nu_j-\nu_r),
\nn
\ee
which proves (\ref{thetas1}). Using this equality, we find
$$
\eta_{lj}-\eta_{rj}-(\al,\nu_j)=h_\al+(\al,\nu_j-\nu_r)-(\al,\nu_j)=h_\al-(\al,\nu_r)=\eta_{l}-\eta_{r}.
$$
This proves (\ref{thetas2}).
\end{proof}

Define $\Phi_X$ to be  a free right  $\hat U^0$-module generated by all routes in $\Hg(X)$.
To every route  $i\dashleftarrow j$   we assign a weight $\nu_j-\nu_i$.
This makes $\Phi_X$ a left $U^0$-module and therefore a $\hat U^0$-bimodule.
The left action is defined  as $h\> \vec m =\vec m \>\tau_{\wt(\vec m)}(h)$ for $h\in U^0$,
where $\tau_\mu$ is an automorphism of the algebra $U^0$ induced by the translation $\h^*\to \mu+\h^*$ along  $\mu\in \h^*$.
It assigns $q^{h_\al}\mapsto q^{h_\al}q^{(\al,\mu)}$ for all $\al \in \Pi$.

%Concatenation of routes defines a partial binary operation on $\Phi_X$ such that

For each route $\vec m$ from $\Hg(X)$ we define an element
$$
 \phi_{\vec m}= \phi_{m_1,m_2} \ldots  \phi_{m_{k-1},m_k}\in U^-
$$
end extend this assignment to a $U^0$-bimodule homomorphism $p_\Phi\colon \Phi_X\to B^-$.
For a route $(i)$ of zero length we set $\phi_i=1$.
The map $p_\Phi$ is multiplicative with respect to concatenation of routes:
$$
p_\Phi( \vec m\cdot \vec n)=p_\Phi( \vec  m)p_\Phi( \vec n), \quad \vec  m,\vec  n\in \Phi_X.
$$
Now suppose that $X$ is a graded $U^+$-module and $\psi_X=\sum_{i\in I_X}x_i\tp \psi_i \in X\tp  \Vc$ the corresponding canonical element.
We define a free right $\hat U^0$-module $\Psi_X$ similarly to $\Phi_X$ but now we assign
 a weight $-\nu_i$ to every route $\vec m\in \Psi_X$, with $i=\max(\vec m)$, the leftmost node in $\vec m$.
This makes $\Psi_X$ a   $\hat U^0$-bimodule similarly to $\Phi_X$.

Define a $\hat U^0$-bimodule map $p_\Psi\colon \Psi_X\to \Vc$ by
$$
\vec m=(m_1,\ldots, m_k)\mapsto \phi_{m_1,m_2}\ldots \phi_{m_{k-1},m_k}\psi_{m_k}=\phi_{\vec m}\psi_{m_k}=\psi_{\vec m}.
$$
 Note that
$$
p_\Psi( \vec m\cdot \vec n)=p_\Phi( \vec m)p_\Psi( \vec n), \quad \vec m\in \Phi_X,\quad \vec n\in \Psi_X,\quad
$$
i.e. the map $p_\Psi$ is multiplicative with respect to concatenation of routes.

We define an operator $\prt_{l,r}\colon \Phi_X\to \Phi_X\tp_{\hat U^0} \Phi_X$ for $(l,r)\in P(\al)$ as follows.
We set it zero on routes of zero length.
On routes of length $1$ we put
\be
\begin{array}{rrccc}
\prt_{l,r} (l,r)&=&(l)\tp (r)[h_\al]_q  ,\\
\prt_{l,r} (l,j)&=&- (l)\tp q^{-h_\al}(r,j),
&r\succ j,
\\
\prt_{l,r}(i,r) &=&(i,l)q^{h_\al}\tp (r),
&i\succ l,\\
\prt_{l,r}(i,j) &=&0&\mbox{otherwise}.
\end{array}
\label{prt on phi}
\ee
We extend it to all routes as  a homomorphism of right $\hat U^0$-modules and a derivation with respect to concatenation:
$$
\prt_{l,r}(\vec m\cdot \vec n)=(\prt_{l,r} \vec m)\cdot \bigl((l)\tp  \vec n\bigr) + \bigl(\vec m\tp (r)\bigr)\cdot (\prt_{l,r} \vec n).
$$
Here we assume that concatenation commutes with the $\hat U^0$-actions in the following sense:
$(\vec \ell \>h)\cdot \vec \rho=\vec \ell \cdot (h\>\vec \rho)$ and  $\vec \ell \cdot (\vec \rho \>h)=(\vec \ell \cdot \vec \rho) h $ for all $h\in \hat U^0$.
At most one concatenation factor survives the action of $\prt_{l,r}$; then $\cdot$ in the right-hand side makes sense.
%The operator $\prt_{l,r}$ acts nontrivially on at most one concatenation factor, which makes correctly defined.

Now we define operators  $\nbl_{l,r}\colon\Psi_X\to \Phi_X\tp_{\hat U^0} \Psi_X$.
On routes of zero length,  we put
\be
\nbl_{l,r}(j)=
-\dt_{lj} (l)\tp  q^{-h_\al}(r)=-\dt_{lj}(l) \tp  (r)q^{-(\eta_{l}-\eta_{r})}.
\label{prt on psi}
\ee
A general route $\vec m\in \Psi_X$ can be presented as a concatenation $\vec m\cdot (j)$ with  $j=\min(\vec m)$ and $\vec m$  viewed as an element of $\Phi_X$.
Set
$$
\nbl_{l,r}(\vec m)=(\prt_{l,r}\vec m)\cdot \bigl((l)\tp  (j)\bigr) +\bigl(\vec m\tp (r)\bigr)\cdot \bigl(\nbl_{l,r}(j)\bigr)
$$
and extend it to $\Psi_X$ as a right $\hat U^0$-module homomorphism.
\begin{lemma}
\label{diff1}
The operator $\nbl_{l,r}$ is  a derivation with respect to concatenation,
$$
\nbl_{l,r}( \vec m\cdot \vec n)=(\prt_{l,r} \vec m)\cdot \bigl((l)\tp \vec n\bigr) + \bigl(\vec m\tp (r)\bigr)\cdot (\nbl_{l,r}\vec n).
$$
\end{lemma}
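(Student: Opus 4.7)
The plan is to reduce the general Leibniz-type identity to the defining equation for $\nbl_{l,r}$ on a concatenation $\vec m \cdot (j)$ with a zero-length second factor, by decomposing the right factor $\vec n \in \Psi_X$ as $\vec n = \vec n' \cdot (j)$ where $j = \min(\vec n)$ and $\vec n' \in \Phi_X$ consists of $\vec n$ with its tail node $j$ treated as the common concatenation endpoint. Associativity of concatenation then gives $\vec m \cdot \vec n = (\vec m \cdot \vec n') \cdot (j)$, which is once again a concatenation with a length-zero route and hence falls under the definition of $\nbl_{l,r}$.

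Applying that definition yields
\[
\nbl_{l,r}(\vec m \cdot \vec n) = \bigl(\prt_{l,r}(\vec m \cdot \vec n')\bigr) \cdot \bigl((l)\tp (j)\bigr) + \bigl((\vec m \cdot \vec n')\tp (r)\bigr) \cdot \nbl_{l,r}(j).
\]
I then invoke the already-established derivation property of $\prt_{l,r}$ on $\Phi_X$ to expand $\prt_{l,r}(\vec m \cdot \vec n')$ as $(\prt_{l,r}\vec m)\cdot((l)\tp \vec n') + (\vec m \tp (r))\cdot (\prt_{l,r}\vec n')$. Substituting back produces three summands which need to be re-grouped into the claimed formula.

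The re-grouping proceeds using three elementary facts: associativity of componentwise concatenation in $\Phi_X \tp_{\hat U^0} \Phi_X$ (and $\Phi_X \tp_{\hat U^0} \Psi_X$); the tautological identities $(l)\cdot (l) = (l)$ and $(r)\cdot (r) = (r)$ for zero-length routes at a single node; and the compatibility $(\vec \ell \,h)\cdot \vec \rho = \vec \ell \cdot (h\,\vec \rho)$ for $h \in \hat U^0$, which lets me shuffle the weight factors $q^{\pm h_\al}$ and $[h_\al]_q$ across the concatenation boundary. The first summand becomes $(\prt_{l,r}\vec m)\cdot ((l)\tp \vec n)$ after absorbing $(l)\tp \vec n' \cdot (l)\tp (j)$ into $(l) \tp \vec n$. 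The remaining two summands share the common left factor $(\vec m \tp (r))$ (using $(\vec m \cdot \vec n') \tp (r) = (\vec m \tp (r)) \cdot (\vec n' \tp (r))$), and what remains inside the brackets is exactly $(\prt_{l,r}\vec n')\cdot ((l)\tp (j)) + (\vec n' \tp (r))\cdot \nbl_{l,r}(j) = \nbl_{l,r}(\vec n' \cdot (j)) = \nbl_{l,r}(\vec n)$.

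The main obstacle is purely bookkeeping: keeping the parallel componentwise concatenations on the two tensor factors aligned, and verifying that every $\hat U^0$-coefficient produced by $\prt_{l,r}$ and $\nbl_{l,r}$ (through the zero-length rule (\ref{prt on psi}) and the factors $q^{\pm h_\al}$ in (\ref{prt on phi})) ends up on the correct side after re-bracketing. No new representation-theoretic input is required beyond the $\hat U^0$-bimodule conventions already laid out, and in particular the quantum Serre-type intertwining relation (\ref{intertwiner_F}) is not used here—that role is played later, when $\nbl_{l,r}$ is linked back to the action of $e_\al$.
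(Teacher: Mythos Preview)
Your proposal is correct and is precisely the argument the paper has in mind: the paper's own proof consists of the single word ``Clear,'' so what you have written is a faithful unpacking of that claim, reducing to the defining formula for $\nbl_{l,r}$ on $\vec m\cdot(j)$ together with the already-stated derivation property of $\prt_{l,r}$ and the $\hat U^0$-compatibility of concatenation. There is no alternative approach to compare against.
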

\begin{proof}
  Clear.
\end{proof}
We describe the action of $\nbl_{l,r}$ in more detail. Let $\vec m$ be a route with $i=\max(\vec m)$. We shall consider the following three cases depending
on a position of $(l,r)$ relative to  $\vec m$.

1.  $\nbl_{l,r}\vec m=0$ if $\vec m\cap (l,r)=\varnothing$.

2. $\nbl_{l,r}\vec m=0$ if a route $\vec m$ does intersect with $(l,r)$ but $\vec m \cup (l,r)$ is not a route $i\dashleftarrow $.

3. Suppose that  a route $i\stackrel{\>\>\vec m}\dashleftarrow $  with $|\vec m|>0$ intersects with $(l,r)$ and $\vec m\cup (l,r)$ is also a route
$i\dashleftarrow $.
Define $\vec \ell$ as consisting of nodes from $\vec m$ on the left of $l$, and $\vec \rho$ of nodes on the right of $r$.
In particular, they may be empty. Then, for $(l,r)\in P(\al)$, we have
\be
\begin{array}{rrccc}
\nbl_{l,r}\bigl((\vec\ell,l)\cdot (l,r)\cdot(r,\vec \rho)\bigr)&=&(\vec\ell,l) \tp [h_\al]_q (r,\vec \rho)&=&
(\vec\ell,l)\tp (r,\vec \rho)[\eta_{l}-\eta_{r}]_q ,\\
\nbl_{l,r}\bigl((\vec\ell,l)\cdot (l,\vec \rho)\bigr)&=&-(\vec\ell,l)\tp  q^{-h_\al}(r,\vec \rho)
&=&-(\vec\ell,l)\tp (r,\vec \rho)q^{-(\eta_{l}-\eta_{r})},
\\
\nbl_{l,r}\bigl((\vec\ell,r)\cdot (r,\vec \rho)\bigr)&=&(\vec\ell,l)q^{h_\al} \tp (r,\vec \rho)
&=&(\vec\ell,l)\tp (r,\vec \rho)q^{(\eta_{l}-\eta_{r})}.
\end{array}
\label{loc_op_def}
\ee
These formulas can be derived upon  observation that $\nbl_{l,r}$ acts non-trivially on at most one concatenation factor of length $1$ or $0$ in   $\vec m$.
The Cartan factors on the right result from  pushing $h_\al$ to the right, in accordance with (\ref{thetas1}) and (\ref{thetas2}).
\begin{remark}
\em
Let us comment on the item 2 above. This situation may occur when
\begin{itemize}
  \item $i=r$, then $\prt_{l,r}$ kills  the first  concatenation factor in $\vec m$ by (\ref{prt on phi}),
  \item there is a pair of adjacent nodes in $\vec m$, either $(l,k)$ or $(m,r)$, such that $r\not \succ k$ or, respectively, $m\not \succ l$. Then
  $\prt_{l,r}$ kills such factors, according to (\ref{prt on phi}). This is a manifestation of strict  triangularity of matrix $F$, cf. Proposition \ref{strong-triangular}.
\end{itemize}
\end{remark}

 Earlier we introduced two maps  $p_\Phi\colon \Phi_X\to B^-$ and $p_\Psi\colon \Psi_X\to \Vc$.
Define a new map $p_{\Phi\Psi}\colon \Phi_X\tp_{ \hat U^0}\Psi_X \to \Vc$ as  the composition of
$p_\Phi\tp p_\Psi$ with the action of $B^-$ on $\Vc$. It is  a $\hat U^0$-bimodule homomorphism.
Next we prove a key fact facilitating the method of Hasse diagrams.
\begin{propn}
\label{intertwing loc}
For each $\al\in \Pi$,
$$
p_{\Phi\Psi}\circ \sum_{(l,r)\in P(\al)}\pi^\al_{lr}\nbl_{l,r}=(e_\al\>\tr\>)\circ p_\Psi,
$$
where $\tr $ designates the action of $U^+$ on $\Vc$.
\end{propn}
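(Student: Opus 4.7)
The plan is to verify the equality of two additive maps $\Psi_X\to\Vc$. Both sides are right $\hat U^0$-linear---the left by construction of $\nbl_{l,r}$ and $p_{\Phi\Psi}$ as right module maps, the right because multiplication in $\hat\Ac$ is associative---so it suffices to check the identity on routes $\vec m$, and I would proceed by induction on the length $|\vec m|$.

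For the base case $\vec m=(j)$, the formula (\ref{prt on psi}) yields
$$
\sum_{(l,r)\in P(\al)}\pi^\al_{lr}\nbl_{l,r}(j) \;=\; -\sum_{r\colon (j,r)\in P(\al)}\pi^\al_{jr}\,(j)\tp q^{-h_\al}(r).
$$
Applying $p_{\Phi\Psi}$ gives $-\sum_r\pi^\al_{jr}q^{-h_\al}\psi_r$, which equals $e_\al\psi_j$ by the defining transformation law (\ref{can_invariant}) of the canonical invariant.

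For the inductive step I would write $\vec m=(m_1,m_2)\cdot\vec n$ with $\vec n=(m_2,\ldots,m_k)\in\Psi_X$ and apply the derivation rule of Lemma \ref{diff1}. The second summand $\sum_{(l,r)}\pi^\al_{lr}\bigl((m_1,m_2)\tp(r)\bigr)\cdot\nbl_{l,r}\vec n$, after projection by $p_{\Phi\Psi}$ and use of the multiplicativity of $p_\Phi$ and $p_\Psi$, reduces to $\phi_{m_1m_2}\cdot p_{\Phi\Psi}\bigl(\sum_{(l,r)}\pi^\al_{lr}\nbl_{l,r}\vec n\bigr)=\phi_{m_1m_2}e_\al\psi_{\vec n}$ via the induction hypothesis. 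The first summand $\sum_{(l,r)}\pi^\al_{lr}(\prt_{l,r}(m_1,m_2))\cdot((l)\tp\vec n)$ splits, by (\ref{prt on phi}), into three nonvanishing cases: (A) $(l,r)=(m_1,m_2)$; (B) $l=m_1$ with $r\succ m_2$; (C) $r=m_2$ with $m_1\succ l$. I would then apply the intertwining relation (\ref{intertwiner_F}) to expand $e_\al\phi_{m_1m_2}\psi_{\vec n}$ into four pieces---namely $\phi_{m_1m_2}e_\al\psi_{\vec n}$, $\sum_k\phi_{m_1 k}q^{h_\al}\pi^\al_{km_2}\psi_{\vec n}$, $-\sum_k\pi^\al_{m_1 k}q^{-h_\al}\phi_{km_2}\psi_{\vec n}$, and $\pi^\al_{m_1m_2}[h_\al]_q\psi_{\vec n}$---and match them respectively with the induction contribution and Cases C, B, A.

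The main technical obstacle is the bookkeeping of Cartan factors. On the route side, the $q^{\pm h_\al}$ and $[h_\al]_q$ appearing in (\ref{prt on phi}) and (\ref{prt on psi}), when transported across a route of weight $-\nu_r$, pick up the twist $\tau_{-\nu_r}$; on the $\Vc$ side, commuting $q^{\pm h_\al}$ past the weight vector $\phi_{\star m_2}\psi_{\vec n}$ produces corresponding scalar shifts $q^{\mp(\al,\nu_\star)}$. Lemma \ref{thetas}, through the identity $\eta_l-\eta_r=h_\al-(\al,\nu_r)$, is precisely what reconciles these two kinds of shifts. Once this matching is verified term by term for Cases A, B, C, the induction closes.
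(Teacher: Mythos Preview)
Your argument is correct and follows the paper's own proof essentially step for step: reduce to routes by right $\hat U^0$-linearity, induct on length via Lemma~\ref{diff1}, and match the three nontrivial cases of $\prt_{l,r}(m_1,m_2)$ against the three terms on the right of (\ref{intertwiner_F}). The only remark is that your final paragraph overstates the bookkeeping: since $p_{\Phi\Psi}$ is a $\hat U^0$-bimodule homomorphism and the Cartan factors $q^{\pm h_\al}$, $[h_\al]_q$ in (\ref{prt on phi})--(\ref{prt on psi}) occupy exactly the same positions as in (\ref{intertwiner_F}) and (\ref{can_invariant}), the match in Cases A, B, C is direct and Lemma~\ref{thetas} is not needed here---that lemma is used only to rewrite the result in the $\eta$-form appearing in the right-hand column of (\ref{loc_op_def}).
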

\begin{proof}
Formulas (\ref{intertwiner_F}) and (\ref{can_invariant})
can be specialized as the following  system of equalities for  a simple pair $(l,r)\in P(\al)$:
\be
\begin{array}{ccccc}
[e_{\al},\phi_{lr}]&=&  \pi^\al_{lr}\>[h_\al]_q, & &    \\[5pt]
[e_{\al},\phi_{ir}]&=&  \sum_{(l,r)\in P(\al)} \pi^\al_{lr}\>\phi_{il}q^{h_{\al}} ,   \\[5pt]
[e_{\al},\phi_{lj}]&=&     -\sum_{(l,r)\in P(\al)} \pi^\al_{lr}\> q^{-h_{\al}} \phi_{rj},    \\[5pt]
[e_{\al},\phi_{ij}]&=&  0,  & i\not = l,j\not =r, &      \\[5pt]
e_{\al}\tr \psi_{j}&=&  -\dt_{jl}\sum_{(l,r)\in P(\al)} \pi^\al_{lr}\>q^{-h_{\al}}\psi_{r}. &
\end{array}
\label{basic_relation}
\ee
The summation in the second line is over $l$ subject to $i\succ l$, while in the third line over $r$ subject to $r\succ j$.
%Here we assume that $i\succ l$ and $r\succ j$.
We can restrict our consideration to a basis element of $\Psi_X$ because all maps involved in the equality   are homomorphisms of right $\hat U^0$-modules.

Let $\vec m$ be a route.
If $|\vec m|=0,1$, then the assertion follows from comparing (\ref{basic_relation}) with  (\ref{prt on phi}) and (\ref{prt on psi}).
Suppose that the statement is proved for $\vec m$ of length $m\geqslant 1$ and let
 $\vec m=(i,k,\vec \rho)$ be a route with $\vec \rho\not =\varnothing$. Then $\psi_{\vec m}=\phi_{i,k}\psi_{(k,\vec \rho)}$, and
 the Leibnitz rule gives
$$
e_\al\tr \psi_{\vec m}=[e_\al, \phi_{i,k}]\psi_{(k,\vec \rho)}+\phi_{i,k}(e_\al\tr\psi_{(k,\vec \rho)}).
$$
Now the proof follows from Lemma \ref{diff1} and the induction assumption.
\end{proof}
\begin{remark}
\em
  The rationale for introducing an auxiliary module $\Psi_X$ is  that  $\nbl_{l,r}$ cannot be consistently defined  directly on $\Vc$ because
  $\psi_{\vec m}\in \Vc$ are generally not independent.

  Concatenation can be extended to $\Phi_X$ and $\Psi_X$ as an associative multiplication with zero product of non-adjacent routes.
  However the maps $p_\Phi$ and $p_\Psi$ will not be algebra and module homomorphisms.
\end{remark}
\section{Construction of the normalizer}
\label{SecNormNeg}
We assume that the Hasse diagram $\Hg(X)$ is a disjoint union of connected components bounded from below, e.g. if
 $X$ is a locally finite $U$-module.

For each pair  $i\succ j$ of nodes from $\Hg(X)$ define
$B_{j}^i\in \hat U^0$ by formulas
\be
 B_{j}^i=\frac{q^{-(\eta_{i}-\eta_{j})}}{[\eta_{i}-\eta_{j}]_q}, \quad (B_{j}^i)^{-1}=\frac{q^{2(\eta_{i}-\eta_{j})}-1}{q-q^{-1}}.
\label{Cartan_factors}
\ee

%Abusing notation we identify $\vec m\in \Phi_X$ with $\phi_{\vec m}$ and $\vec m\in \Psi_X$ with $\psi_{\vec m}$.
For each route $(i,\vec m)$ define
$$
  B^i_{\vec m}=B^i_{m_1}\ldots B^i_{m_k}\in \hat U^0, \quad \widetilde{(i,\vec m)} ={(i,\vec m)} B^i_{\vec m}\in \Psi_X,\quad \tilde \psi_{(i,\vec m)} =\psi_{(i,\vec m)} B^i_{\vec m}\in \Vc.
$$
If $\vec m=\varnothing $, then $\widetilde{(i)}=(i)$ and $\tilde \psi_{i}=\psi_i$  is understood.

For each simple pair $(l,r)$ we define special elements of $\Psi_X$ which we call  $(l,r)$-chains.
They will be sums  of
$\widetilde{\vec m}$
that are classified by a position of $\vec m$ relative to the nodes $\{l,r\}$.
All summands in a  chain carry the same weight and the routes involved have the same start and end nodes.
%Different chains do not intersect.

A 3-chain is defined as
\be
\widetilde{(\vec \ell, l,\vec \rho)}
+
\widetilde{(\vec \ell, l,r,\vec \rho)}
+
 \widetilde{(\vec \ell, r,\vec \rho)} ,
 \quad \vec \ell \not =\varnothing .
\label{3-chain_AA}
\ee
The route $\vec \rho$ may be empty while $\vec \ell$ may not.

A 2-chain  is defined as
\be
\widetilde{(l,\vec \rho)} +\widetilde {(l,r,\vec \rho)}.
 \label{2-chains}
\ee
As before, $\vec \rho$ may be empty, which corresponds to a chain $(l)+ (l,r)B^l_r$.

Finally, a 1-chain is   $\widetilde{\vec m}$ of the following two types:
\begin{itemize}
  \item $\vec m$ does not intersect with $(l,r)$,
  \item $\vec m\cap (l,r)\not =\varnothing$ but $\vec m \cup (l,r)$ is not a route $\max(\vec m)\dashleftarrow $.
\end{itemize}
The second option  is possible if $\vec m$ intersects  $(l,r)$ by exactly one node.

We  say that a route $\vec m$ belongs to a chain if $\widetilde{\vec m}$ enters it as a summand.

A chain in $\Vc$ is a $p_\Psi$-image of a chain in $\Psi_X$.
\begin{lemma}
\label{split_to_chains}
  Let $i\in I_X$ and $(l,r)$ be a simple pair. Then each route $i \dashleftarrow $ falls into exactly one $(l,r)$-chain.
\end{lemma}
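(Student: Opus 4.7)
The plan is a case analysis on how a given route $i\stackrel{\vec m}{\dashleftarrow}$ intersects the pair $\{l,r\}$, exploiting that $\nu_l-\nu_r=\al$ is a simple root. First I would record the key preliminary observation: there is no node $k\in I_X$ with $l\succ k\succ r$, for otherwise $\nu_l-\nu_k$ and $\nu_k-\nu_r$ would be non-zero elements of $\Z_+\Pi$ summing to the simple root $\al$. Consequently, whenever both $l$ and $r$ occur in $\vec m$ they must be \emph{adjacent} in $\vec m$, which pins down the shape of such routes.

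I would then split on the cardinality of $\vec m\cap\{l,r\}$. If the intersection is empty, $\vec m$ is already a 1-chain of the first type. If it equals $\{l,r\}$, adjacency lets me write uniquely $\vec m=(\vec\ell,l,r,\vec\rho)$; the routes $(\vec\ell,l,\vec\rho)$ and $(\vec\ell,r,\vec\rho)$ obtained by deleting $r$ or $l$ are automatically valid, since $k\succ l$ (where $k$ is the last node of $\vec\ell$, when nonempty) and $r\succ$ first of $\vec\rho$ (when nonempty) are inherited from $\vec m$. Hence $\vec m$ is the middle summand of a 3-chain if $\vec\ell\neq\varnothing$, or of a 2-chain if $\vec\ell=\varnothing$ (so $i=l$).

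If $\vec m\cap\{l,r\}=\{l\}$, I write $\vec m=(\vec\ell,l,\vec\rho)$ and test whether $\vec m\cup(l,r)=(\vec\ell,l,r,\vec\rho)$ is a valid route, equivalently whether $\vec\rho=\varnothing$ or $r\succ$ first node of $\vec\rho$. In the affirmative case $\vec m$ is the first summand of a 3-chain (or of a 2-chain when $\vec\ell=\varnothing$); in the negative case $\vec m$ falls into a 1-chain of the second type. The case $\vec m\cap\{l,r\}=\{r\}$ is handled symmetrically, with one extra subcase: when $i=r$, the candidate $\vec m\cup(l,r)$ would start above $i$ and thus fail to be a route $i\dashleftarrow$, placing $\vec m$ again in a 1-chain of the second type; otherwise the last node $k$ of $\vec\ell$ satisfies $k\succ l$ precisely when $\vec m$ fits as the third summand of a 3-chain.

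Uniqueness follows once existence is verified: the positions of $l$ and $r$ in $\vec m$ determine the pair $(\vec\ell,\vec\rho)$ of the ambient chain, and the chain's type is dictated by whether $\vec\ell$ is empty and whether $\vec m\cup(l,r)$ is a route. I expect the main obstacle to be bookkeeping rather than conceptual: one must verify that in each case all required summands of a 2- or 3-chain are themselves valid routes, which is precisely where the non-existence of intermediate nodes between $l$ and $r$, together with the comparison against the first node of $\vec\rho$ and the last node of $\vec\ell$, does the real work.
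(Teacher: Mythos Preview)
Your argument is correct and follows essentially the same case analysis as the paper, only organized by $|\vec m\cap\{l,r\}|$ rather than by first excluding 1-chains and then splitting on $i=l$ versus $i\succ l$. Your version is in fact more explicit: the paper leaves both the adjacency of $l$ and $r$ in any route containing them and the uniqueness claim implicit, whereas you spell these out.
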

\begin{proof}
Suppose that a route $i\stackrel{\>\>\vec m}{\dashleftarrow}$ is not a 1-chain.
  Then $\vec m\cap  (l,r)\not=\varnothing$ and $\vec m\cup (l,r)$ is a route  $i\stackrel{\>\>\vec m}{\dashleftarrow}$.
There are the following two possibilities.

If   $l=i$, then there is a $2$-chain  comprising $\vec m\cup r$ and  $(\vec m\cup r)\backslash \{r\}$.

If  $i\succ l$,  consider  two routes $\vec \ell\subset \vec m$ and $\vec \rho \subset \vec m$ consisting of nodes
on the left of  $l$ and on the right of $r$, respectively. While $i\in \vec \ell$, the route  $\vec \rho$ may be empty.
By assumption, $(\vec \ell, l,r,\vec\rho)$ is a route  $i{\dashleftarrow}$, then  so are $(\vec \ell,l,\vec\rho)$ and $(\vec \ell, r,\vec \rho)$.
They form a 3-chain  of type (\ref{3-chain_AA}) containing  $\vec m$.
 \end{proof}
\begin{lemma}
 The operator $\nbl_{l,r}$  kills all $(l,r)$-chains.
\label{chains_killed}
\end{lemma}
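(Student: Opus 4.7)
I would treat the three types of chains separately.

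For a 1-chain $\widetilde{\vec m}$ the route $\vec m$ either avoids $(l,r)$ entirely, or meets $(l,r)$ in exactly one node in such a way that $\vec m\cup(l,r)$ is not a route $\max(\vec m)\dashleftarrow$. In either situation the derivation rule (Lemma \ref{diff1}) together with \eqref{prt on phi}--\eqref{prt on psi} forces every concatenation factor of length $\leqslant 1$ in $\vec m$ to be annihilated, as already recorded in the remark following \eqref{loc_op_def}; so $\nbl_{l,r}\widetilde{\vec m}=0$.

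For a 3-chain I would apply $\nbl_{l,r}$ to the three summands $\widetilde{(\vec\ell,l,\vec\rho)}$, $\widetilde{(\vec\ell,l,r,\vec\rho)}$, $\widetilde{(\vec\ell,r,\vec\rho)}$ using, respectively, the second, first and third lines of \eqref{loc_op_def}. Each image factors as $(\vec\ell,l)\tp (r,\vec\rho)$ times a scalar in $\hat U^0$; since the tensor is over $\hat U^0$ the Cartan prefactors coming from the tildes may be freely merged with these scalars. With $i=\max(\vec\ell)$ the common piece $B^i_{\vec\ell\setminus\{i\}}(\cdots)B^i_{\vec\rho}$ pulls out, and the whole cancellation is reduced to the scalar identity
\begin{equation*}
-q^{-(\eta_l-\eta_r)}\,B^i_l\;+\;[\eta_l-\eta_r]_q\,B^i_l B^i_r\;+\;q^{\eta_l-\eta_r}\,B^i_r\;=\;0
\end{equation*}
in $\hat U^0$. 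The 2-chain case is the specialization $i=l$: the third summand is absent (no $\vec\ell$ lies to the left of $l$) and the factors $B^i_l$ disappear, so the identity degenerates to $-q^{-(\eta_l-\eta_r)}+[\eta_l-\eta_r]_q B^l_r=0$, which is immediate from \eqref{Cartan_factors}.

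The main obstacle is the scalar identity above. Setting $a=\eta_i-\eta_l$ and $b=\eta_i-\eta_r$, so that $\eta_l-\eta_r=b-a$, substitution of \eqref{Cartan_factors} and clearing the denominator $[a]_q[b]_q$ turns it into
\begin{equation*}
-q^{-b}[b]_q\;+\;[b-a]_q\,q^{-a-b}\;+\;q^{-a}[a]_q\;=\;0,
\end{equation*}
which upon expanding each $[z]_q=(q^z-q^{-z})/(q-q^{-1})$ telescopes to $(-1+q^{-2b})+(q^{-2a}-q^{-2b})+(1-q^{-2a})=0$. The rest of the care is bookkeeping: interpreting \eqref{loc_op_def} correctly when $\vec\ell$ or $\vec\rho$ is empty (with $(\vec\ell,l)=(l)$, $(r,\vec\rho)=(r)$ in those cases), and checking that the Cartan scalars slide consistently across $\tp_{\hat U^0}$ against the translation convention for the left $\hat U^0$-action on $\Phi_X$ and $\Psi_X$.
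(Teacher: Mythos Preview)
Your proposal is correct and follows essentially the same approach as the paper: case split into 1-, 2-, and 3-chains, reduction of the 3-chain case to the scalar identity $-q^{-(\eta_l-\eta_r)}B^i_l+[\eta_l-\eta_r]_qB^i_lB^i_r+q^{\eta_l-\eta_r}B^i_r=0$, and verification via \eqref{Cartan_factors}. The only cosmetic differences are that the paper treats the 2-chain first and the 3-chain afterwards (rather than viewing the former as the $i=l$ degeneration of the latter), and that the paper verifies the bracketed factor in \eqref{3-chain factor} by dividing through by $B^i_lB^i_r$ and invoking the inverse formulas in \eqref{Cartan_factors}, whereas you clear the denominators $[a]_q[b]_q$ and telescope directly; these are equivalent one-line computations.
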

\begin{proof}
This is obvious with regard to  1-chains.
Applying $\nbl_{l,r}$ to a 2-chain (\ref{2-chains}) we get, up to a factor,
\be
1\tp (r,\vec \rho )(-q^{-(\eta_{l}-\eta_{r})}+  [\eta_{l}-\eta_{r}]_qB_{r}^l),
\nn
\ee
that is zero by definition of $ B_{r}^l$.

Application of $\nbl_{l,r}$ to 3-chain  (\ref{3-chain_AA})  with $i=\max(\vec \ell)$ produces
\be
(\vec \ell,l)\tp  (r,\vec \rho)\bigl(-q^{-(\eta_{l}-\eta_{r})}B_{l}^i
+
[\eta_{l}-\eta_{r}]_qB_{l}^iB_{r}^i
+
 q^{(\eta_{l}-\eta_{r})}B_{r}^i\bigr),
\label{3-chain factor}
\ee
up to a Cartan  factor on the right.
In order to see that the term in the brackets vanishes, it is convenient to divide it by $B_{l}^iB_{r}^i$  and use the formula   (\ref{Cartan_factors})
for their inverses.
This completes the proof.
\end{proof}
\begin{thm}
  For each $i\in I_X$, the element
\be
z_i=\psi_{i}+\sum_{ \vec m \not =\varnothing }\tilde \psi_{(i,\vec m)} \in \Vc
\label{norm_element}
\ee
belongs to $\Vc^+$. Furthermore, $ z_i=\wp \psi_i$.
\label{Main_thm}
\end{thm}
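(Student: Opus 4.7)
The plan is to prove the two assertions separately: first show $z_i\in\Vc^+$, then identify $z_i$ with $\wp\psi_i$ via the annihilation properties of the extremal projector.

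For the invariance, I would lift $z_i$ to
\[
\tilde z_i \;=\; \widetilde{(i)} \;+\; \sum_{\vec m \neq \varnothing}\widetilde{(i,\vec m)} \;\in\; \Psi_X,
\]
so that by construction $p_\Psi(\tilde z_i)=z_i$. Fix a simple root $\al\in\Pi$. For every simple pair $(l,r)\in P(\al)$, Lemma \ref{split_to_chains} says that each route starting at $i$ belongs to exactly one $(l,r)$-chain; consequently $\tilde z_i$ is a $\hat U^0$-linear combination of such chains. Lemma \ref{chains_killed} then guarantees that $\nbl_{l,r}$ kills each of them, whence $\nbl_{l,r}\tilde z_i=0$. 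Multiplying by $\pi^\al_{lr}$, summing over $(l,r)\in P(\al)$ and invoking Proposition \ref{intertwing loc} gives
\[
e_\al\tr z_i \;=\; p_{\Phi\Psi}\Bigl(\sum_{(l,r)\in P(\al)}\pi^\al_{lr}\,\nbl_{l,r}\tilde z_i\Bigr) \;=\; 0.
\]
Since $\al$ is arbitrary, $z_i\in\Vc^+$.

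For the identification with $\wp\psi_i$, I would use the defining relations (\ref{extr_proj}). From $\wp-1\in\breve\Jc_+=\sum_\al\breve\Ac\,e_\al$ together with $e_\al v=0$ for every $v\in\Vc^+$ one obtains $\wp v=v$ on $\Vc^+$; in particular $\wp z_i=z_i$. From $\wp\Jc_-=0$ and $\Jc_-=\g_-\Ac$ one gets $\wp g=0$ in $\breve\Ac$ for every $g\in\g_-$, hence $\wp(gv)=0$ for every $v\in\Vc$. Now for any nonempty route $\vec m$ the leading factor $\phi_{i,m_1}$ of $\tilde\psi_{(i,\vec m)}$ has strictly negative weight $\nu_{m_1}-\nu_i\neq 0$, so as an element of $U^-=U(\g_-)$ it lies in the augmentation ideal $\g_-\cdot U^-$ by PBW. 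Therefore $z_i-\psi_i\in\g_-\Vc$ and $\wp(z_i-\psi_i)=0$, yielding
\[
z_i \;=\; \wp z_i \;=\; \wp\psi_i+\wp(z_i-\psi_i) \;=\; \wp\psi_i.
\]

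The main obstacle is the invariance step, where one must check that the combinatorial partitioning furnished by Lemma \ref{split_to_chains} applies uniformly across all simple pairs $(l,r)\in P(\al)$ so that $\tilde z_i$ decomposes as a sum of $(l,r)$-chains for each such pair simultaneously; here the calculus of Hasse diagrams developed in the previous sections does all of the work. Once invariance is in hand, the identification with $\wp\psi_i$ is essentially formal, being forced by the two-sided annihilation properties of $\wp$ combined with the fact that $z_i-\psi_i$ is manifestly of negative-root type.
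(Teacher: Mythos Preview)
Your proof is correct and follows the same route as the paper's: lift $z_i$ to $\Psi_X$, use Lemma~\ref{split_to_chains} to partition the routes into $(l,r)$-chains for each simple pair, apply Lemma~\ref{chains_killed} to kill each chain, and conclude invariance via Proposition~\ref{intertwing loc}; then deduce $z_i=\wp\psi_i$ from $z_i-\psi_i\in\g_-\Vc$ together with $\wp\Jc_-=0$. Your treatment of the second assertion is somewhat more explicit than the paper's (you spell out both $\wp z_i=z_i$ from $\wp-1\in\breve\Jc_+$ and $\wp(z_i-\psi_i)=0$), but the argument is the same in substance.
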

\begin{proof}
In order to prove that $z_i$ is $U^+$-invariant, it is sufficient to check  that each $\nbl_{l,r}$  annihilates  the pre-image of $z_i$ in $\Psi_X$, thanks to
Proposition \ref{intertwing loc}.
By Lemma \ref{split_to_chains}, the right-hand side  can be rearranged in a sum over $(l,r)$-chains,
which are all killed by $\nbl_{l,r}$, according to Lemma \ref{chains_killed}. This proves the first assertion.

The second assertion is obvious  because  $\tilde \psi_i=\psi_i \mod \g_- \Ac$ and $\wp (\g_- \Ac) =0$.
\end{proof}
\noindent
The summation in (\ref{norm_element}) is bounded from below due  to an assumption on $X$; the route $\vec m$ in summation is assumed non-empty.

\section{Canonical elements for $\Ac=U(\a)$}
\label{SecNormPos}
In this section we give a construction of canonical elements for a special case when   $\Ac$ is the universal enveloping algebra
of a Lie algebra $\a\supset \g$.

Choose root vectors $f_\al,e_\al$ in $\g$ normalized as  $(e_\al, f_\al)=1$ for all $\al\in \Rm^+$.
The tensor $\Fc$ turns to $\Cc=\sum_{\al\in \Rm^+}e_\al\tp f_\al$, in the classical limit $q\to 1$.

Since the adjoint action of $\g$ on $\a$ is semi-simple, we can choose
a $\g$-submodule $Y\subset \a$ complementary to $\g$. Denote by  $X$  its dual module.
 Pick up a weight basis $\{v_i\}_{i\in I_X}\subset X$ relative to $\h\subset \g$ and let $\{\psi_i\}_{i\in I_X}\subset Y $ be
its dual basis. We can choose it  compatible with an  irreducible decomposition of $X$.
The set $\{\psi_i\}_{i\in I_X}$   generates a PBW basis of $\Ac$ as a  $U^- - \B^+$-bimodule.

For each $\al \in \Rm^+$ let $\pi^\al_{ij}$ be matrix entries of the operator $\ad \>e_\al$ restricted to $X$.
Then the matrix $F\in \End(X)\tp U(\g_-)$ has entries  $\phi_{ij}=\pi^\al_{ij}f_\al$,  for all $i,j\in I_X$ such that $\nu_i-\nu_j=\al\in \Rm^+$.
\begin{propn}
\label{M-Z-generators_cl_gen}
  The elements
\be
z_i=\psi_{i}+\sum_{k=1}^\infty \sum_{i\succ i_1\succ \ldots \succ i_k} \phi_{ii_1}\ldots \phi_{i_{k-1}i_k}\psi_{i_k}\frac{1}{\eta_i-\eta_{i_1}}\ldots \frac{1}{\eta_i-\eta_{i_k}},
\quad i\in I_X,
\ee
generate a PBW basis in $\hat Z(\a,\g)$ over $\hat U(\h)$.
%If $X$ is restricted from an irreducible finite-dimensional $\g$-module, $\hgt(X)$ is the height
%of the difference between the highest and the lowest weights of $X$.
\end{propn}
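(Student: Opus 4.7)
The plan is to derive this statement by specializing Theorem~\ref{Main_thm} to the classical limit $q\to 1$ and then invoking the general transfer principle recalled in Section~\ref{SecMickAlg}, by which the $\varphi$-image of a PBW system in $\Ac$ generates a PBW basis in $\hat Z(\Ac,\g)$. The input graded $U^+$-module is $X$ with weight basis $\{v_i\}_{i\in I_X}$ dual to $\{\psi_i\}\subset Y$. Since $Y$ is a $\g$-stable complement to $\g$ in the finite dimensional Lie algebra $\a$, the module $X$ is finite dimensional, so $\Hg(X)$ has finitely many nodes and the sum in (\ref{norm_element}) is automatically finite. To identify the image of $Y$ in $\Vc=\hat\Ac/\hat\Jc_+$ with the $B^+$-module $X'$ of Section~\ref{SecQuantLax}, observe that for $e_\al\in\g_+$ and $\psi_j\in Y$,
\[
 e_\al\psi_j=[e_\al,\psi_j]+\psi_j e_\al\equiv \ad(e_\al)(\psi_j)\mod \Jc_+,
\]
while the $\g$-invariance of the pairing $Y\times X\to\C$ gives $\ad(e_\al)(\psi_i)=-\sum_j\pi^\al_{ij}\psi_j$, reproducing the classical limit of (\ref{can_invariant}).

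I would then read off the $q\to 1$ limit of (\ref{norm_element}) term by term. The degeneration $\Fc\to\Cc=\sum_{\al\in\Rm^+}e_\al\tp f_\al$ forces $\phi_{ij}=\pi^\al_{ij}f_\al$ when $\nu_i-\nu_j=\al\in\Rm^+$ (consistently with Proposition~\ref{strong-triangular}), matching the matrix entries in the stated formula. For the Cartan factor, as $q\to 1$,
\[
 B^i_j=\frac{q^{-(\eta_i-\eta_j)}}{[\eta_i-\eta_j]_q}\longrightarrow\frac{1}{\eta_i-\eta_j},
\]
so $B^i_{\vec m}$ degenerates to $\prod_{l=1}^k(\eta_i-\eta_{i_l})^{-1}$ and (\ref{norm_element}) collapses to the displayed expression for $z_i$. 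By Theorem~\ref{Main_thm}, each $z_i$ equals $\wp\psi_i$ in $\hat\Vc^+$, hence represents a well-defined element of $\hat Z(\a,\g)$ under the isomorphism of Theorem~\ref{thmMick}.

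The PBW step is then immediate: the set $\{\psi_i\}_{i\in I_X}$ is a PBW system for $\Ac=U(\a)$ by hypothesis (and by the classical PBW theorem applied to the ordered basis $\g_-\cup\{\psi_i\}\cup\h\cup\g_+$ of $\a$), so the general transfer principle recalled at the end of Section~\ref{SecMickAlg} implies that $\{z_i\}_{i\in I_X}$, being the $\varphi$-images of a PBW system under $\wp\hat\Ac\wp\simeq\hat Z(\a,\g)$, generates a PBW basis in $\hat Z(\a,\g)$ over $\hat U^0=\hat U(\h)$.

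The only non-routine point is verifying that $Y\subset\Vc$ realizes the dual module $X'$ so that the machinery of Sections~\ref{SecQuantLax}--\ref{SecHasseDiag} is applicable; once this identification is in place, everything reduces to a direct computation of the $q\to 1$ limit of $B^i_j$ and $\phi_{ij}$, together with a citation of already-proved general results. In particular, no new combinatorics beyond what Theorem~\ref{Main_thm} already supplies is required.
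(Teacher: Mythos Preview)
Your proposal is correct and follows essentially the same route as the paper: identify $Y\subset\Vc$ with $X'$, invoke Theorem~\ref{Main_thm} to get $z_i=\wp\psi_i$, and then apply the transfer principle from Section~\ref{SecMickAlg} to conclude that the $\wp$-images of the PBW system $\{\psi_i\}$ form a PBW basis in $\hat Z(\a,\g)$. The only cosmetic difference is that you phrase the specialization as a $q\to 1$ limit of $B^i_j$ and $\phi_{ij}$, whereas the paper treats the classical case as a direct instance of the general framework (with $\Fc$ replaced by $\Cc$); the extra verification that $e_\al\psi_i\equiv -\sum_j\pi^\al_{ij}\psi_j \bmod \Jc_+$ is a useful detail the paper leaves implicit.
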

\begin{proof}
The elements $\psi_i$, $i\in I_X$, generate a PBW basis of $\Ac$ as a $U^--B^+$-bimodule.
The map $\varphi \colon \psi_i\mapsto \wp \psi_i \wp$ from Section \ref{SecMickAlg} takes it to a PBW basis in $\wp\hat \Ac\wp\simeq \Ac/(\Jc_-+\Jc_+)$
as a right $U_q(\h)$-module. The algebra $\wp\hat \Ac\wp$ is isomorphic to $\hat Z(\a,\g)$ by Theorem \ref{thmMick}.
On the other hand,
$$
 \wp \hat\Ac\wp\simeq \wp (\hat \Ac/\Jc^+)=\wp \Vc.
$$
Thus the set $\{\wp \psi_i\}_{i\in I_X}$ is a PBW system in $\hat Z(\a,\g)$. But $\wp \psi_i=z_i$ for all $i\in I_X$  by Theorem \ref{Main_thm}.
This completes the proof.
\end{proof}
Remark that the summation parameter of the left sum does not exceed
 the length of the longest path in  the irreducible submodule
 containing $v_i$.
\subsection{Levi subalgebras}
In this section we consider in a more detail the case when $\a$ is simple and $\g$ is the commutant of a Levi subalgebra in $\a$, that is, $\g^\pm\subset \a^\pm$ and $\Pi_\g\subset \Pi_\a$ for their sets of simple roots.
The Cartan subalgebra $\k$ of $\a$ is the direct sum of $\h$ and $\c$ that commutes with $\g$.
We extend $U(\k)$ to $\hat U(\k)=\hat U(\h)U(\c)$.

Define by $\bar \Rm_\a$ the set of classes of roots whose root vectors enter the same irreducible $\g$-module in $\a^\pm\ominus \g^\pm$.
We will also write  $\bar \al \in \bar \Rm_\a$ for the class containing $\al\in \Rm_{\a/\g}$ (the complement $\Rm_\a\backslash \Rm_\g$).

The $\g$-modules  $X^+_{\bar \mu}$ and $X^-_{\bar \mu}$ defined by
$$
 X_{\bar \mu}^\pm=\sum_{\al\in \bar \mu} \a_{\pm\al}, \quad \bar \mu\in  \bar \Rm^+_{\a/\g}.
$$
are dual to each other.
We set
$$
\psi^-_\al=e_{-\al}\in X^-_{\bar \mu}\simeq (X^+_{\bar \mu})', \quad \psi^+_{-\al}= e_\al \in X^+_{\bar \mu}\simeq (X^-_{\bar \mu})', \quad \al\in \bar \mu\in \bar \Rm^+_{\a/\g}.
$$
Recall that by $X'$ we denote the dual to a $\g$-module $X$.
For each $\al\in \Rm^+_\g$ and $\mu\in \Rm_{\a/\g}^+$ let
  $N^\pm_{\al,\mu}$ be a structure constant determined from the equality
$$
   [e_{\al},e_{\pm\mu}]=
\left \{
\begin{array}{ccc}
N^\mp_{\al,\mu} e_{\pm\mu+\al}, & \mu \pm \al,\mu\in \Rm_{\a/\g}^+,\\0, &\mbox{otherwise}.
\end{array}
\right.
$$

For $\mu\in \Rm^+_{\a/\g}$ put
\be
z^\pm_\mu=e_{\pm\mu}+\sum_{k=1}^\infty\sum_{\nu\mp(\al_1+\ldots+\al_k)=\mu}f_{\al_1}\ldots f_{\al_k}e_{\pm\nu }\prod_{j=1}^k\frac{N^\pm_{\al_j,\mu_j}}{\eta_{\mp\mu}-\eta_{\mp\mu_j}},
\label{M-Z-generators}
\ee
where the right  summation is done over all $\nu\in \bar\mu$ and ordered $k$-partitions of $\pm(\nu-\mu)$ into
a sum  $\sum_{j=1}^{k}\al_j$ of positive roots from $\Rm^+_\g$ subject to
$$
\mu_j=\mu\pm(\al_1+\ldots+\al_j)\in \bar \mu, \quad j=1,\ldots, k.
$$
%The denominator in the node factor is
%$$
%\sum_{l=1}^{i}h_{\al_l}+\sum_{l=1}^{i}(\rho,\al_l)+\frac{1}{2}(\sum_{l=1}^{i}\al_l)^2\pm(\sum_{l=1}^{i}\al_i,\mu).
%$$

Here is a specialization of Proposition \ref{M-Z-generators_cl_gen}.
\begin{thm}
 The elements $z^\pm_\mu$,  $\mu \in \Rm_{\a/\g}^+$,  generate  a PBW basis in $\hat Z(\a,\g)$, over $\hat U(\k)$.
\label{PBW-base-class}
\end{thm}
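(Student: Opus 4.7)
The strategy is to specialize Proposition \ref{M-Z-generators_cl_gen} to the Levi-pair setting and to translate its route-indexed sum into the explicit expression (\ref{M-Z-generators}).

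First I would fix a concrete PBW system adapted to the problem. Since $\c$ centralizes $\g$, the adjoint $\g$-module complement of $\g$ in $\a$ decomposes as
$$
Y=\c\op\bigoplus_{\bar\mu\in\bar\Rm_{\a/\g}}\bigl(X^+_{\bar\mu}\op X^-_{\bar\mu}\bigr),
$$
with the root-vector blocks $X^\pm_{\bar\mu}$ irreducible and mutually dual as $\g$-modules. The vectors $\{e_{\pm\mu}\}_{\mu\in\Rm^+_{\a/\g}}$ together with any basis of $\c$ form a weight basis of $Y$ compatible with this decomposition and hence a PBW system for $\Ac=U(\a)$. Basis elements of $\c$ are $\g$-invariant, so their Hasse diagrams are one-node graphs and Proposition \ref{M-Z-generators_cl_gen} returns the elements themselves; absorbing $U(\c)$ into the coefficient ring upgrades the base $\hat U(\h)$ of the proposition to $\hat U(\k)=\hat U(\h)U(\c)$, yielding the stated PBW property.

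Next I would apply Proposition \ref{M-Z-generators_cl_gen} to each non-trivial block separately. To obtain $z^-_\mu$ I take $X$ of the proposition to be $X^+_{\bar\mu}$, with weight basis $\{e_\nu\}_{\nu\in\bar\mu}$ and dual basis $\{e_{-\nu}\}_{\nu\in\bar\mu}\subset X^-_{\bar\mu}$, so the generator indexed by $i\leftrightarrow\mu$ is $\psi_i=e_{-\mu}$; exchanging $X^+_{\bar\mu}\leftrightarrow X^-_{\bar\mu}$ gives $z^+_\mu$. A route $\mu=\mu_0\succ\mu_1\succ\ldots\succ\mu_k$ in $\Hg(X^+_{\bar\mu})$ is exactly a sequence $\mu_j\in\bar\mu$ with consecutive differences $\al_j:=\mu_{j-1}-\mu_j\in\Rm^+_\g$, i.e.\ an ordered $k$-partition of $\mu-\nu$ (with $\nu=\mu_k$) into positive roots of $\g$ whose partial sums subtracted from $\mu$ remain inside $\bar\mu$. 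This bijection matches term-by-term the summation range displayed in (\ref{M-Z-generators}).

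Third, I would identify the coefficients. By definition of the structure constants, $\ad e_{\al_l}(e_{\pm\mu_l})=[e_{\al_l},e_{\pm\mu_l}]=N^\mp_{\al_l,\mu_l}\,e_{\pm\mu_{l-1}}$, whence the matrix entry $\pi^{\al_l}_{i_{l-1}i_l}$ of $\ad e_{\al_l}$ on $X^\pm_{\bar\mu}$ equals $N^\mp_{\al_l,\mu_l}$; consequently $\phi_{i_{l-1}i_l}=N^\mp_{\al_l,\mu_l}f_{\al_l}$. The Cartan shortcut $\eta_i=\eta_{\nu_i}$ from Section \ref{SecNormNeg}, combined with $\nu_i=\pm\mu$ and $\nu_{i_j}=\pm\mu_j$, yields the denominators $\eta_{\mp\mu}-\eta_{\mp\mu_j}$. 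Substituting into the formula of Proposition \ref{M-Z-generators_cl_gen} reproduces (\ref{M-Z-generators}) verbatim, and the PBW property is inherited from the proposition.

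The only mild obstacle is bookkeeping: one must track the $\pm$ superscripts and the sign/shift relation $N^-_{\al,\mu}=-N^+_{\al,\mu+\al}$ forced by invariance of the Killing form when verifying that the two blocks $X^\pm_{\bar\mu}$ feed into $z^\mp_\mu$ consistently, and one must check that the base-ring extension $\hat U(\h)\leadsto\hat U(\k)$ is compatible with the weight grading, which is immediate since $\c\subset\k$ acts semisimply on $\a$ and centrally on $\g$. No new analytic or combinatorial input beyond Proposition \ref{M-Z-generators_cl_gen} is required.
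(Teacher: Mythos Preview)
Your proposal is correct and follows essentially the same approach as the paper: both deduce the theorem from Proposition~\ref{M-Z-generators_cl_gen} by observing that the root vectors $e_{\pm\mu}$ furnish the PBW system and that $z^\pm_\mu=\wp\,e_{\pm\mu}\bmod U(\a)\g_+$. The paper's proof is a one-liner invoking this, whereas you additionally spell out the block decomposition of the $\g$-complement, verify that the route-sum of Proposition~\ref{M-Z-generators_cl_gen} reproduces the explicit formula~(\ref{M-Z-generators}) term by term, and address the base-ring passage $\hat U(\h)\to\hat U(\k)$ via the trivial $\c$-block---all correct and more detailed than the paper requires.
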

\begin{proof}
The elements  $\psi^\pm_\mu$ generate a PBW basis in $U(\a)/(\g_-U(\a)+U(\a)\g_+)$. Now the proof is obvious because   $z^\pm_\mu=\wp_\g e_{\pm \mu}$ modulo $U(\a)\g_+$.
\end{proof}

\section{Canonical elements for quantum groups}
\label{SecGenerators}
Here we give a quantum version of the previous section, when  $\g$ is a commutant of a Levi subalgebra in $\a$.
As before, the invariant inner product on $\g$ is restricted from $\a$.
The algebra $\hat Z_q(\a,\g)$ enjoys a natural polarization: it is freely generated over $\hat U_q(\k)$ by a product of its  positive and negative parts.
We will need the respective versions of canonical elements participating in the PBW basis in $\hat Z_q(\a,\g)$.

We choose a quasitriangular structure in the quantum group $U_q(\a)$ in the same polarized form as in $U_q(\g)$ and use the same notation for it.
Define intertwining elements
$$
\check{\Ru}= q^{-\sum_{i}h_i\tp h_i}\Ru, \quad \tilde \Ru= q^{-\sum_{i}h_i\tp h_i}\Ru^{-1}.
$$
Suppose that $V$ is a $U_q(\a)$-module and $v_0\in V$ is  $U_q(\b_+)$-invariant.
Denote by $\pi$ the  representation homomorphism $\pi\colon U_q(\a)\to \End(V)$.
Introduce quantum Lax operators
$$
L^-=(\pi \tp \id)(\check{\Ru})\in U_q(\a_-), \quad L^+=(\id \tp \pi)(\tilde \Ru)\in U_q(\a_+)U_q(\k).
$$
In a weight basis $\{x_i\}_{i\in I_V}$ in $V$, their matrix entries carry  weights
$$
\wt(L^-_{ij})=\nu_j-\nu_i<0, \quad \wt(L^+_{ij})=\nu_j-\nu_i>0.
$$
Here $\nu_i\in \La(V)$ is a weight of the vector $x_i$.
Remark that we consider the same module $V$ for $L^+$ and $L^-$ for the sake of simplicity. One can take for them different modules independently. Set
$$
 \psi^-_{i}=\frac{1}{q-q^{-1}}L^-_{i0}\quad \mbox{for}\> \nu_i>\nu_0, \quad \psi^+_{i}=\frac{1}{q-q^{-1}}L^+_{i0} \quad \mbox{for}\> \nu_i<\nu_0.
$$
They are elements  of weight  $\nu_0-\nu_i$ (relative to $\a$) in the positive and negative  Borel subalgebras of $U_q(\a)$, respectively.
\begin{propn}
\label{Mick_gen}
Let $V$ be a finite dimensional $U_q(\a)$-module with a $U_q(\b_+)$-invariant vector $v_0$. Suppose that $X^\ve\subset V$ with  $\ve=\pm$ is an irreducible
$U_q(\g_+)$-submodule whose weights satisfy $\ve\La(X^\ve)<0$.
  Then  $\psi_{X^\ve}=\sum_{i\in I_{X^\ve}} x_i\tp  \psi^\ve_i $ is a canonical invariant element  of $X^\ve$.
\end{propn}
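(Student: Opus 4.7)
The plan is to verify the defining transformation law (\ref{can_invariant}) directly for the candidate dual basis $\psi^\ve_i = \frac{1}{q-q^{-1}}L^\ve_{i0}$ inside $\Vc = \hat\Ac/\hat\Jc_+$. Concretely, for each simple positive root $\al \in \Pi$ of $\g$ and each $i \in I_{X^\ve}$, I need
\[
e_\al\,\psi^\ve_i \;\equiv\; -\sum_{k\in I_{X^\ve}}q^{-h_\al}\pi^\al_{ik}\,\psi^\ve_k \pmod{\hat\Jc_+},
\]
together with linear independence of the $\psi^\ve_i$; the latter follows from the known linear independence of distinct matrix entries of quantum Lax operators inside $U_q(\a)$.

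For $\ve = -$, the core calculation starts from the quasitriangularity identity $\check\Ru\,\Delta(x) = \tilde\Delta(x)\,\check\Ru$ with $x = e_\al$. Expanding $\Delta(e_\al) = e_\al\tp q^{h_\al} + 1\tp e_\al$ and $\tilde\Delta(e_\al) = e_\al\tp q^{-h_\al} + 1\tp e_\al$, applying $\pi\tp\id$, and reading off matrix entries in the weight basis $\{x_i\}$ of $V$ produces
\[
[e_\al,\, L^-_{ij}] \;=\; \sum_{k}L^-_{ik}\,\pi^\al_{kj}\,q^{h_\al} \;-\; \sum_{k}\pi^\al_{ik}\,q^{-h_\al}L^-_{kj}.
\]
Setting $j=0$ collapses the first sum because $v_0$ is $U_q(\b_+)$-invariant and so $\pi^\al_{k0}=0$ for every $k$. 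Reducing modulo $\hat\Jc_+$ kills the term $L^-_{i0}\,e_\al$ on the left-hand side, and dividing by $q-q^{-1}$ reproduces the target relation for $\psi^-_i$. For $\ve=+$, the parallel computation uses $\tilde\Ru = q^{-\sum h_i\tp h_i}\Ru^{-1}$: the identity $\Ru^{-1}\Delta^{\mathrm{op}}(x) = \Delta(x)\Ru^{-1}$ yields an analogous commutation relation for $L^+_{ij}$, which after specialization to $j=0$ and reduction modulo $\hat\Jc_+$ gives the transformation law in the same form for $\psi^+_i$.

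The step requiring care is that the sum, initially ranging over all $k\in I_V$, should restrict to $k\in I_{X^\ve}$. By hypothesis $X^\ve$ is stable under $\pi(e_\al)$ for $\al\in\Pi_\g$, so for $i\in I_{X^\ve}$ the condition $\pi^\al_{ik}\neq 0$ forces $x_k$ to be an ancestor of $x_i$ under the $U_q(\g_+)$-action at the level of matrix entries. The sign condition $\ve\La(X^\ve)<0$ combined with irreducibility of $X^\ve$ as a $U_q(\g_+)$-submodule prevents such ancestors from leaving $X^\ve$: any such $x_k$ has $\g$-weight of the prescribed sign, and together with $X^\ve$ it would generate a $U_q(\g_+)$-submodule strictly larger than $X^\ve$, contradicting irreducibility. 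This closure property is the main obstacle to a fully formal write-up, since it depends on a careful bookkeeping of the weight combinatorics of $V$ relative to the chosen highest weight vector $v_0$. Once established, $\psi_{X^\ve}$ satisfies (\ref{can_invariant}) and is therefore a canonical invariant in the sense of Section~\ref{SecQuantLax}.
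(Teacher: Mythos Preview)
Your overall approach coincides with the paper's: both start from the intertwining identities for $\check\Ru$ and $\tilde\Ru$, specialize the second index to the invariant vector $v_0$ (so that $\pi^\al_{k0}=0$ kills one sum), and reduce modulo $\Jc_+$. The paper writes the resulting relations as
\[
e_\al\psi^-_i \equiv -q^{-h_\al}\sum_{0\prec k\prec i}\pi(e_\al)_{ik}\psi^-_k,\qquad
e_\al\psi^+_i \equiv -q^{-h_\al}\sum_{k\prec i\prec 0}\pi(e_\al)_{ik}\psi^+_k,
\]
and then simply says ``restricting $i$ to $I_{X^\ve}$ we obtain components of the canonical element.''

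Where your write-up goes wrong is in the justification of the restriction of the $k$-sum to $I_{X^\ve}$. Saying that $X^\ve$ is a $U_q(\g_+)$-submodule gives $\pi(e_\al)X^\ve\subset X^\ve$, i.e.\ $\pi^\al_{ik}=0$ whenever $k\in I_{X^\ve}$ and $i\notin I_{X^\ve}$. That is the \emph{wrong} direction: what you need is $\pi^\al_{ik}=0$ for $i\in I_{X^\ve}$ and $k\notin I_{X^\ve}$, which is the statement that $X^\ve$ is (with respect to the chosen basis) a \emph{quotient}, equivalently a direct summand. Your appeal to irreducibility is also misdirected: irreducibility forbids proper nonzero submodules of $X^\ve$, but says nothing against $X^\ve$ sitting inside a larger $U_q(\g_+)$-submodule of $V$, which is exactly what would happen if some such $x_k$ existed. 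So the paragraph beginning ``The step requiring care\ldots'' does not establish what it claims.

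The paper itself is terse at this point. In the intended application (the quantum adjoint module in Section~\ref{SecGenerators}) the submodule $X^\ve_{\bar\mu}$ is in fact a $U_q(\g)$-submodule of a finite-dimensional module, hence a direct summand by semisimplicity; choosing the weight basis compatibly with the isotypic decomposition makes $\pi^\al_{ik}=0$ for $i\in I_{X^\ve}$, $k\notin I_{X^\ve}$ automatic. A clean fix for your proof is either to assume $X^\ve$ is a $U_q(\g)$-submodule (and invoke semisimplicity), or to add the hypothesis that the basis of $V$ is adapted to a $U_q(\g_+)$-complement of $X^\ve$.
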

\begin{proof}
 The intertwining axiom of the R-matrix implies identities
\be
 (e_{\al}\tp 1 + q^{-h_{\al}}\tp e_{\al}) \tilde \Ru&=& \tilde \Ru (e_{\al}\tp 1 + q^{h_{\al}}\tp e_{\al}) ,
\nn\\
(e_{\al}\tp q^{-h_{\al}} + 1\tp e_{\al})\check{\Ru}
&=&  \check{\Ru}(e_{\al}\tp q^{h_{\al}} + 1\tp e_{\al}),
\nn
\ee
for all simple roots from $\Pi_\a$, and for all $\al\in \Pi_\g$ in particular.
These readily imply
\be
 e_{\al}\psi^+_i&=&- q^{-h_{\al}}\sum_{k\prec i\prec 0} \pi(e_{\al})_{ik}  \psi^+_{k} \mod U_q(\a)\g_+,
\nn
\\
 e_{\al}\psi^-_{i}&=&- q^{-h_{\al}}\sum_{0\prec k\prec i}\pi(e_{\al})_{ik} \psi^-_{k} \mod U_q(\a)\g_+,
\nn
\ee
for all $\al \in \Pi_\g$,
because all matrix entries $\pi(e_{\al})_{k0}$ vanish. Restricting $i$ to $I_{X^\ve}$ we obtain components of the canonical element relative to $X^\ve$.
\end{proof}
\noindent
Note that  weights are $\wt(\psi^\pm_i)=-\nu_i$, with respect to   $\h\subset \g$ because $\nu_0$ is orthogonal to $\h^*$.

Under the assumptions on $\a$ and $\g\subset \a$ of the previous section, set $\Ac=U_q(\a)$ and $U=U_q(\g)$.
Set $V$ to be the irreducible $\Ac$-module whose highest weight is the maximal root in $\Rm^+_\a$.
It is a deformation of the adjoint module $\a$, and its $\g$-module structure is similar to the
case $q=1$. The problem essentially reduces to explicit description of the relative Lax operators.

The weights   in $\La(V)$ have multiplicities  $\dim V[0]=\rk \>\a$ and $\dim V[\pm \al]=1$ for $\al\in \Rm_\a^+$. Pick up a non-zero vector  $e^\pm_\al$ in each $V[\pm\al]$.
%They can chosen orthonormal with respect to a contravariant form on $V$.
One can find a basis $\{h^\al\}_{\al\in \Pi_\a}\in V[0]$ such that $e_{\pm\al} h^\bt=0$ for distinct $\al, \bt \in \Pi_\a$.
For each $\bar \mu\in \bar \Rm^+_{\a/\g}$ select $\bt \in \Pi_\a$ such that $e_{\pm \mu} h^\bt \propto \dt_{\mu\bt} e^\pm_\mu$, for all $\mu \in \bar \mu$.
Such $\bt$ depends only on the quasi-root $\bar \mu$.

The vector space $X^\pm _{\bar \mu}=\Span\{e^\pm_{\nu}\}_{\nu\in \bar \mu}\subset V$ is a $U_q(\g)$-module.
For the fixed basis $\{e^\mp_\mu\}_{\mu\in \bar \mu}\subset  X^\mp_{\bar \mu} $, construct the invariant canonical element with components $\{\psi^\pm_\mu\}_{\mu\in \bar \mu}$ as prescribed
by Proposition \ref{Mick_gen}, and the set  $\{z^\pm_\mu\}_{\mu\in \bar \mu}\subset  \hat Z_q(\a,\g)$, as in Theorem \ref{Main_thm}.

Now consider the localization of the quantum group $U_q(\a)$ to a $\C[[\hbar]]$-Hopf algebra $U_\hbar(\a)$, assuming $q=e^\hbar$ and denote
by $\hat Z_\hbar(\a,\g)$ the corresponding extension of  $\hat Z_q(\a,\g)$.
\begin{propn}
The elements $z^\pm_\mu$, $\mu\in  \Rm_{\a/\g}^+$ generate a PBW basis in $\hat Z_\hbar(\a,\g)$ over $\hat U_\hbar(\k)$.
\end{propn}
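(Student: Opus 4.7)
The plan follows the template of Theorem \ref{PBW-base-class}, with the quantum family $\{\psi^\pm_\mu\}$ playing the role of the classical root vectors in Section \ref{SecNormPos}; the $\hbar$-adic topology is what allows the classical PBW property to be transported to the quantum setting.

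I would proceed in three stages. First, I establish that the weight elements $\{\psi^\pm_\mu\}_{\mu\in \Rm_{\a/\g}^+}$, together with a basis of the Cartan complement $U_\hbar(\c)$, form a PBW system for $U_\hbar(\a)$ in the sense of Section \ref{SecMickAlg}: ordered monomials in these elements freely generate $U_\hbar(\a)$ as a $U_\hbar(\g_-) - \hat U_\hbar(\b_+)$-bimodule. Second, I invoke Theorem \ref{Main_thm} to identify $z^\pm_\mu = \wp \psi^\pm_\mu$, so the $z^\pm_\mu$ are precisely the $\varphi$-images of the non-Cartan PBW generators under $\varphi\colon x \mapsto \wp x \wp$. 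Third, Theorem \ref{thmMick} yields the algebra isomorphism $\hat Z_\hbar(\a,\g) \simeq \wp \hat U_\hbar(\a) \wp$, and the general principle recalled at the end of Section \ref{SecMickAlg} guarantees that the $\varphi$-images of a PBW system form a PBW basis of the Mickelsson algebra over $\hat U^0 = \hat U_\hbar(\h)$. Since $U_\hbar(\c)$ commutes with $U_\hbar(\g)$, its generators pass through the Mickelsson construction centrally and merge with $\hat U_\hbar(\h)$ to form the larger coefficient ring $\hat U_\hbar(\k)$, leaving $\{z^\pm_\mu\}_{\mu \in \Rm_{\a/\g}^+}$ as the PBW generators as claimed.

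The crux is the first stage. In the classical situation of Section \ref{SecNormPos}, $\psi^\pm_\mu$ literally is a root vector $e_{\pm\mu}$ sitting in a chosen $\g$-submodule complement, for which the PBW property follows immediately from the classical PBW theorem for $\a$. In the quantum setting, $\psi^\pm_\mu = \frac{1}{q-q^{-1}} L^\pm_{\mu, 0}$ is a matrix entry of a quantum Lax operator constructed from the universal R-matrix, and its role in a free generating set is no longer transparent. I would resolve this by deformation: writing $\Ru = 1 \tp 1 + \hbar r + O(\hbar^2)$ with $r$ the classical $r$-matrix, and using $\frac{1}{q-q^{-1}} = \frac{1}{\hbar} + O(\hbar)$, a direct computation shows $\psi^\pm_\mu \equiv e_{\pm\mu} \bmod \hbar$ up to a unit in $\C[[\hbar]]$. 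Topological freeness of $U_\hbar(\a)$ over $\C[[\hbar]]$, together with the $\hbar$-adic Nakayama lemma applied to the quotient of $U_\hbar(\a)$ by the candidate bimodule span, then lifts the classical basis property to the quantum family $\{\psi^\pm_\mu\}$. This is also the reason the proposition is stated for $\hat Z_\hbar(\a,\g)$ rather than at generic $q$: the $\hbar$-adic framework is exactly what makes the lifting argument clean, whereas no such deformation-theoretic principle is available in the polynomial $q$-setting.
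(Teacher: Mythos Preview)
Your argument is correct and rests on the same deformation-theoretic core as the paper's: both exploit that the $z^\pm_\mu$ reduce to the classical generators of Theorem \ref{PBW-base-class} modulo $\hbar$, and that the $\hbar$-adic setting lets one lift a basis from the classical limit.

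The difference is where the lifting is performed. You run the Nakayama-type argument upstairs in $U_\hbar(\a)$, first establishing that the $\psi^\pm_\mu$ form a PBW system there, and then push this down to $\hat Z_\hbar(\a,\g)$ via $\varphi$ and Theorem \ref{thmMick}. The paper instead works directly inside the Mickelsson algebra: it observes that the weight spaces of $\hat Z^\pm_q(\a,\g)$ are finite dimensional with dimension independent of $q$, so once the classical $z^\pm_\mu$ are known to give a PBW basis, the quantum monomials (which specialise to them) do as well. The paper's route is shorter because it bypasses the question of whether the $\psi^\pm_\mu$ themselves form a PBW system in $U_\hbar(\a)$; that question is in fact left open at fixed $q$ (the conjecture stated just after the proposition), whereas your argument settles it over $\C[[\hbar]]$ as a by-product. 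So your approach yields slightly more information, at the cost of a longer path.
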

\begin{proof}
  Observe that weight spaces in $\hat Z^\pm_q(\a,\g)$ are finite dimensional, and their dimension is independent of $q$.
  The elements $z^\pm_\mu$ deliver a PBW system in the classical limit,  by Theorem \ref{PBW-base-class}, which proves the assertion.
\end{proof}
The problem of PBW basis at a particular $q$ away from a root of unity boils down to the question whether $\psi^\pm_\mu$, $\mu\in \Rm^+_{\a/\g}$,
generate a PBW basis in $U_q(\a)/(\g_-U_q(\a)+U_q(\a)\g_+)$ over the Cartan subalgebra. This is a question about properties of the quantum L-operator and its matrix elements.
We conjecture that is the case indeed.

\vspace{20pt}
%\newpage
\noindent

\underline{\large \bf Acknowledgement}
\vspace{10pt} \\
This work was done at the Center of Pure Mathematics MIPT.
It is financially supported  by Russian Science Foundation  grant 23-21-00282.

\subsection*{\underline{Declarations}}

\subsection*{Ethical approval}
Not applicable

\subsection*{Author's contribution}
Both authors have made equal contribution to the current research and to preparation of the manuscript.

\subsubsection*{Data Availability}
 Data sharing not applicable to this article as no datasets were generated or analysed during the current study.
\subsubsection*{Funding}
This research is  supported  by Russian Science Foundation  grant 23-21-00282.
\subsubsection*{Competing interests}
The authors have no competing interests to declare that are relevant to the content of this article.


\begin{thebibliography}{A}
\bibitem{Mick}  Mickelsson, J.: Step algebras of semisimple Lie algebras,  Rev. Mod. Phys. {\bf 4} (1973), 307--318.

\bibitem{Zh2} Zhelobenko, D., P.,: {\em Extremal projectors and generalized Mickelsson algebras over reductive Lie algebras}, Mathematics of the USSR-Izvestiya, {\bf 33}, \#  1, 85--100.

\bibitem{Mol} Molev, A. I.: Gelfand-Tsetlin bases for classical Lie algebras, in Handbook of algebra.  {\bf 4} 109--170,
Elsevier/North-Holland, Amsterdam, 2006.



\bibitem{Zh1} Zhelobenko, D., P.,: {\em S-algebras and Harish-Chandra Modules over symmetric Lie algebras}, Mathematics of the USSR-Izvestiya, {\bf 37} \# 1 (1991), 1--17.

\bibitem{KN1} Khoroshkin, S., M. Nazarov, M.: “Yangians and Mickelsson algebras, I”, Transform. Groups, {\bf 11} \# 4 (2006), 625--658.

\bibitem{KN2} Khoroshkin, S., M. Nazarov, M.: {\em Yangians and Mickelsson algebras. II} Moscow Math. J., {\bf 6} \# 1  2006, 477--504

\bibitem{KN3} Khoroshkin, S., M. Nazarov, M.: {\em Mickelsson algebras and representations of Yangians}, Transactions of AMS, {\bf 364}, \#  3 ( 2012), 1293--1367

\bibitem{AST} Asherova, R. M., Smirnov, Yu. F., and Tolstoy, V. N.: Projection operators for the
simple Lie groups, Theor. Math. Phys. {\bf 8}  (1971), 813--825.



\bibitem{Birk} Birkhoff, G.: {\em Lattice theory},  1948 AMS.

%\bibitem{Shap} Shapovalov, N. N.: On a bilinear form on the universal enveloping algebra of a complex
%semisimple Lie algebra, Funkt. Anal. Appl. {\bf 6}  (1972), 65--70.

 \bibitem{M1}Mudrov, A.: {\em R-matrix and inverse Shapovalov form}, J. Math. Phys., {\bf 57}  (2016), 051706.



\bibitem{NM}  Nagel, J. G.,  Moshinsky, M.: Operators that lower or raise the irreducible vector spaces of
$U_{n-1}$ contained in an irreducible vector space of $U_n$, J. Math. Phys. {\bf 6} (1965), 682--694.





 \bibitem{AM} Ashton, T., Mudrov, A.: {\em R-matrix and Mickelsson algebras for orthosymplectic quantum groups},  J. Math. Phys., {\bf 56}  (2015),  081701.

\bibitem{D1} Drinfeld,  V.:
   Quantum Groups. In Proc. Int. Congress of Mathematicians,
  Berkeley 1986, Gleason,  A. V.  (eds)  pp. 798--820, AMS, Providence (1987).

\bibitem{ChP} Chari, V. and Pressley, A.:  A guide to quantum groups, Cambridge University Press,
Cambridge 1994.

%\bibitem{J} Jimbo,  M.: {\em A q difference analog of $U(\g)$ and the Yang-Baxter equation}, Lett. Math. Phys. {\bf 10} (1985), 63--69.





\bibitem{K} Khoroshkin, S.: {\em Extremal Projector and Dynamical Twist}, TMF {\bf 139}:1 (2004),   158--176.

\bibitem{KT} Khoroshkin, S.M., and Tolstoy,  V.N.: {\em Extremal projector and universal R-matrix for quantized
contragredient Lie (super)algebras}. Quantum groups and related topics (Wroclaw,
1991), 23-–32, Math. Phys. Stud., 13, Kluwer Acad. Publ., Dordrecht, 1992.

%\bibitem{KO} Khoroshkin, S., Ogievetsky O., {\em Diagonal reduction algebras of $\g\l$ type},  Funct.
%Anal. Appl., {\bf 44} (2010), 182--198
\end{thebibliography}
\end{document}